\newtheorem{theorem}{Theorem}
\newtheorem{lemma}{Lemma}
\newtheorem{corollary}{Corollary}
\newtheorem{proposition}{Proposition}
\newtheorem{remark}{Remark}
\newtheorem{definition}{Definition}
\newtheorem{example}{Example}
\newtheorem{strategy}{Strategy}
\begin{document}

\title{Iteration-Complexity of the Subgradient  Method  on Riemannian Manifolds with Lower Bounded Curvature} 

\author{
O. P. Ferreira \thanks{IME/UFG, Avenida Esperan\c{c}a, s/n, Campus Samambaia,  Goi\^ania, GO, 74690-900, Brazil (e-mails: {\tt orizon@ufg.br},  {\tt mauriciosilvalouzeiro@gmail.com},  {\tt lfprudente@ufg.br}).}
\and
 M. S. Louzeiro \footnotemark[1]
\and
L. F. Prudente \footnotemark[1]
}
\maketitle
\begin{abstract}
The subgradient  method  for convex optimization problems on complete  Riemannian manifolds with lower bounded sectional curvature is  analyzed in this paper.   Iteration-complexity bounds  of the subgradient method  with  exogenous step-size and Polyak's step-size   are stablished, completing and improving recent results on the subject.
\\[2mm]
\noindent
{\bf Keywords:}  Subgradient method, Riemannian manifold, complexity, convex programming, lower bounded curvature.

\end{abstract}
\section{Introduction}
In this paper we  consider the   subgradient method to solve the optimization problem defined by:
\begin{equation} \label{eq:OptP}
\min \{ f(p) ~:~   p\in \mathcal{M}\},
\end{equation}
where the constraint set $\mathcal{M}$ is  endowed  with a structure of a  {\it complete   Riemannian manifold with lower bounded curvature}  and $f:\mathcal{M}\to \overline{\mathbb{R}}$ is a {\it convex function},  where $\overline{\mathbb{R}}=\mathbb{R}\cup\{+\infty\}$ denotes the extended real set numbers. 

It is well known that non-convex problems can be transformed into convex by introducing a suitable metric.  As a consequence, this technique can be exploited in order to find global minimizers \cite{BentoMelo2012, BentoFerreiraOliveira2015, FerreiraCPN2006, Rapcsak1997} { and to reduce the  iteration-complexity of finding such solutions \cite{BentoFerreiraMelo2017,ZhangReddiSra2016}.
Furthermore, many optimization problems are naturally posed on Riemannian manifolds which have a specific underlying geometric and algebraic structure that can be exploited to greatly reduce the cost of obtaining solutions.  For instance, in order to take advantage of the Riemannian geometric  structure, it is preferable to treat certain constrained optimization problems as problems for finding singularities of gradient vector fields  on  Riemannian manifolds rather than using Lagrange multipliers  or projection methods; see \cite{Luenberger1972,   Smith1994, Udriste1994}.   Accordingly, constrained optimization problems are viewed as unconstrained ones from a Riemannian geometry point of view. Besides, Riemannian geometry also opens up new research directions  that aid in developing competitive algorithms;  see \cite{EdelmanAriasSmith1999, NesterovTodd2002, Smith1994}. For this purpose,  extensions of concepts and techniques of optimization from Euclidean  space to Riemannian context have been quite frequent in recent years.  Papers dealing with this subject include, but are not limited to \cite{FerreiraLouzeiroPrudente2018, LiMordukhovichWang2011,  LiYao2012,  WangLiWangYao2015,WangLiYao2015,Manton2015}.

The subgradient method is a very simple algorithm for solving convex optimization  problems and,  besides  it is the departure point for many other more sophisticated and efficient algorithms,   including   $\epsilon$-subgradient methods,  bundle methods and cutting-plane algorithm; see \cite{CorreaLemarechal1993} for a  comprehensive study on this subject.  The subgradient method was originally developed by Shor and others  in the 1960s and 1970s  and since of then,  it and its variants have been applied to a far wider variety of problems in optimization theory; see\cite{Poljak1978, Goffin2012}.  In order to deal with non-smooth convex optimization problems on  complete Riemanian manifolds with non-negative sectional curvature, \cite{FerreiraOliveira1998}  extended   and analyzed the subgradient method  which,  as in the Euclidean context,  is quite simple and possess nice convergence properties.  After this pioneering work,   the subgradient method in the Riemannian setting has been studied in different  contexts; see, for instance, \cite{BentoMelo2012, WangLiWangYao2015, WangLiYao2015, GrohsHosseini2016, BentoCruzNeto2013}. In \cite{BentoMelo2012} the subgradient method was introduced to solve  convex feasibility  problems on complete Riemannian manifolds with non-negative sectional curvatures, and recently in \cite{WangLiWangYao2015, WangLiYao2015}   this method has been analyzed   in manifolds with lower bounded sectional curvatures  and significant  improvements were introduced. More recently,  an asymptotic analysis of the subgradient method  with exogenous step-size and dynamic step-size for convex optimization was considered  in the context of manifolds with lower bounded sectional curvatures, see \cite{Wang2018}.

In this paper we  establish an iteration-complexity bound of the subgradient method  with  exogenous step-size and Polyak's step-size,  for convex optimization problems on complete  Riemannian manifolds with lower bounded sectional curvatures.  Our results  increase the range of applicability of the method compared to the respective results  obtained in \cite{BentoFerreiraMelo2017, ZhangReddiSra2016,  ZhangSra2016}. 
 Moreover, in the asymptotic analysis with  exogenous step-size, we do not assume that the solution set is nonempty, completing the  result of } \cite[Theorem 3.1]{Wang2018}. It should be noted that our analysis use a recently inequality stablished in  \cite{WangLiWangYao2015, WangLiYao2015}.

This paper is organized as follows. Section~\ref{sec:aux} presents some definitions and  preliminary results  related to the Riemannian geometry  that are important to  our study. In Section~\ref{sec:subgradient}, we obtain  iteration-complexity bounds  and other convergence results of the subgradient method with exogenous step-size and Polyak's step-size. In the Section~\ref{sec:num.exem} we
use  the convex feasibility problem for numerically illustrate the results on complexity-iteration bounds of Section~\ref{sec:subgradient}. The last section contains a conclusion.

\section{Notations and basic concepts} \label{sec:aux}
In this section, we introduce some fundamental  properties and notations about Riemannian geometry. These basics facts can be found in any introductory book on Riemannian geometry; see for example, \cite{doCarmo1992, Sakai1996}. We also recall the  definitions  of convexity of function and   Lipschitz continuity in the Riemannian setting   and present some basic properties related to these concepts that will be essential for the analyses of the  subgradient method in the next section.

{\it In this paper $\mathcal{M}$ is endowed  with a structure of a   complete   Riemannian manifold with lower bounded curvature. Throughout the paper  we also assume that the sectional curvature  of $\mathcal{M}$ is bounded below by $\kappa<0$}.  We denote by $T_p\mathcal{M}$ the {\it tangent space} of a Riemannian manifold $\mathcal{M}$ at $p$.  The corresponding norm associated to the Riemannian metric $\langle \cdot ,  \cdot \rangle$ is denoted by $\|  \cdot \|$. We use $\ell(\alpha)$ to denote the length of a piecewise smooth curve $\alpha:[a,b]\rightarrow \mathcal{M}$. The Riemannian  distance  between $p$ and $q$   in a finite dimensional Riemannian manifold $\mathcal{M}$ is denoted  by $d(p,q)$,  which induces the original topology on $\mathcal{M}$, namely,  $(\mathcal{M},d)$ is a complete metric space  where bounded and closed subsets are compact.  Let $\nabla$ be the Levi-Civita connection associated to $(\mathcal{M}, \langle \cdot ~, ~ \cdot \rangle)$.  A vector field $V$ along $\gamma$ is said to be {\it parallel} iff $\nabla_{\gamma^{\prime}} V=0$. If $\gamma^{\prime}$ itself is parallel we say that $\gamma$ is a {\it geodesic}.  Since the geodesic equation $\nabla_{\ \gamma^{\prime}} \gamma^{\prime}=0$ is a second order nonlinear ordinary differential equation, then the geodesic $\gamma=\gamma _{v}( \cdot ,p)$ is determined by its position $p$ and velocity $v$ at $p$. It is easy to check that $\|\gamma ^{\prime}\|$ is constant. The restriction of a geodesic to a  closed bounded interval is called a {\it geodesic segment}. A geodesic segment joining $p$ to $q$ in $\mathcal{M}$ is said to be {\it minimal} if its length is equal to $d(p,q)$. Hopf-Rinow's theorem asserts that any pair of points in a  complete Riemannian  manifold $\mathcal{M}$ can be joined by a (not necessarily unique) minimal geodesic segment.  Due to  the completeness of the Riemannian manifold $\mathcal{M}$, the {\it exponential map} $\exp_{p}:T_{p}\mathcal{M}\to \mathcal{M} $ can be  given by $\exp_{p}v\,=\, \gamma _{v}(1,p)$, for each $p\in \mathcal{M}$.   We proceed by recalling  some concepts and basic properties  about  convexity   in the  Riemannin context.   For more details   see, for example, \cite{Udriste1994,  Rapcsak1997}.  For any two points $p,q\in\mathcal{M}$, $\Gamma_{pq}$ denotes the set of all geodesic segments  $\gamma:[0,1]\rightarrow\mathcal{M}$ with $\gamma(0)=p$ and $\gamma(1)=q$. The closed metric ball in $\mathcal{M}$ centered at the point $p\in\mathcal{M}$ with radius $r>0$ is denoted by $B[p,r]$. Let $\Omega$  be a subset of $\mathcal{M}$. We use  $\emph{int}~\Omega$ to denote the interior of $\Omega$.  A function  $f:\mathcal{M}\rightarrow\overline{\mathbb{R}}$ is said to be proper if  its domain $\emph{dom} f=\{p\in\mathcal{M}:f(p)\neq+\infty\}$ is nonempty, where $\overline{\mathbb{R}}=\mathbb{R}\cup\{+\infty\}$ denotes the extended real set numbers. We use $\Gamma^{f}_{pq}$ to denote the set of all $\gamma\in\Gamma_{pq}$ such that $\gamma\subseteq\emph{dom} f$.  A nonempty subset  $\Omega \subset \mathcal{M}$ is said to be {\it weakly convex} if, for any $p,q\in \Omega$, there is a minimal geodesic segment joining   $p$ to $q$ and it is in $\Omega$.  A  proper function $f: \mathcal{M} \to\overline{\mathbb{R}}$  is said to be {\it convex} on $\mathcal{M}$ if $\emph{dom} f$ is weakly convex and for any $p,q\in\emph{dom} f$ and $\gamma\in\Gamma^{f}_{pq}$ the composition $f\circ\gamma:[0, 1]\to \overline{\mathbb{R}}$ is a convex function on $[0,1]$ i.e.,  
	$$
	f\circ\gamma(t)\leq(1-t)f(p)+tf(q),\qquad \forall~ t\in[0,1], 
	$$
see \cite{WangLiWangYao2015}.  The {\it subdifferential} of a convex function $f:\mathcal{M}\rightarrow\overline{\mathbb{R}}$ at  $p\in\emph{dom} f$ is defined by
\begin{equation} \label{eq:defSugrad}
\partial f(p):=\left\{s\in T_p\mathcal{M}: f(q)\geq f(p)+\left\langle s,\gamma'(0)\right\rangle, \quad \forall ~y\in\emph{dom} f,~\gamma\in \Gamma^{f}_{pq}\right\}.
\end{equation}

We remark that  the subdiffential set  $\partial f(p)$ is nonempty in all  \st{at} $p\in  \emph{int}\, \emph{dom} f$; see  \cite[Proposition 2.5]{WangLiWangYao2015}. {\it In this paper all functions  $f: \mathcal{M} \to\overline{\mathbb{R}}$  is assumed  to be  convex and lower semicontinuous  on $\mathcal{M}$}. The following result is also proved  in \cite[Proposition 2.5]{WangLiWangYao2015}.

\begin{proposition}\label{prop:subglim}
Let $\{p_k\}\subset \mathcal{M}$ a bounded sequence. If the sequence $\{s_k\}$ is such that $s_k\in\partial f(p_k)$, for each $k\in\mathbb{N}$, then $\{s_k\}$ is also bounded.
\end{proposition}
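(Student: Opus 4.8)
The plan is to argue by contradiction, exploiting the defining subgradient inequality~\eqref{eq:defSugrad} by testing it against a point chosen precisely in the direction of $s_k$ itself. Suppose $\{s_k\}$ were unbounded; passing to a subsequence we may assume $\|s_k\|\to+\infty$. Since $\{p_k\}$ is bounded and, by Hopf--Rinow, closed bounded subsets of $\mathcal{M}$ are compact, I would pass to a further subsequence along which $p_k\to\bar p$ for some $\bar p\in\mathcal{M}$. In the setting at hand the bounded sequence lies in a compact subset of the interior of $\mathrm{dom}\,f$, so $\bar p\in\mathrm{int}\,\mathrm{dom}\,f$, and I fix $r>0$ small enough that the closed ball $B[\bar p,2r]$ is contained in the interior of $\mathrm{dom}\,f$.

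Next I would feed into~\eqref{eq:defSugrad} the unit vectors $u_k:=s_k/\|s_k\|\in T_{p_k}\mathcal{M}$ together with the endpoints $q_k:=\exp_{p_k}(r\,u_k)$, joined to $p_k$ by the geodesic $\gamma_k(t):=\exp_{p_k}(t\,r\,u_k)$, $t\in[0,1]$, which satisfies $\gamma_k'(0)=r\,u_k$ and $d(\gamma_k(t),p_k)=tr\le r$. Since $p_k\to\bar p$, for all large $k$ the whole segment $\gamma_k$ lies in $B[\bar p,2r]\subset\mathrm{dom}\,f$, hence $\gamma_k\in\Gamma^{f}_{p_k q_k}$ and the inequality in~\eqref{eq:defSugrad} applies, giving
\begin{equation*}
f(q_k)\ \ge\ f(p_k)+\big\langle s_k,\, r\,u_k\big\rangle\ =\ f(p_k)+r\,\|s_k\|,
\end{equation*}
so that $r\,\|s_k\|\le f(q_k)-f(p_k)$.

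It remains to bound the right-hand side uniformly in $k$, and this is exactly the hard part. Both $p_k$ and $q_k$ lie in the compact set $B[\bar p,2r]$. Lower semicontinuity of $f$ immediately yields $m:=\inf_{B[\bar p,2r]}f>-\infty$, so $f(p_k)\ge m$. For the upper bound I would invoke the local boundedness of convex functions on the interior of their domain, a standard consequence of convexity (see the convexity results in \cite{Udriste1994,WangLiWangYao2015}), obtaining $M:=\sup_{B[\bar p,2r]}f<+\infty$ and hence $f(q_k)\le M$. Combining the two estimates gives $r\,\|s_k\|\le M-m$ for all large $k$, contradicting $\|s_k\|\to+\infty$; therefore $\{s_k\}$ is bounded. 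The delicate point is precisely this local upper bound: whereas the lower bound follows directly from lower semicontinuity, boundedness from above relies essentially on $\bar p$ being an \emph{interior} point of $\mathrm{dom}\,f$, and indeed the conclusion can fail at boundary points, where subgradients may blow up.
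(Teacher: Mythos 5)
The paper itself contains no proof of this proposition: it is simply attributed to \cite[Proposition~2.5]{WangLiWangYao2015}. Your argument is the standard one behind that citation, and its core is sound: feed the unit vector $s_k/\|s_k\|$ into the subgradient inequality \eqref{eq:defSugrad} along the geodesic $t\mapsto\exp_{p_k}\bigl(t\,r\,s_k/\|s_k\|\bigr)$ to get $r\|s_k\|\le f(q_k)-f(p_k)$, then bound $f$ below by lower semicontinuity and above by local boundedness of convex functions on a fixed compact ball. Both bounds are legitimate; just note that the local upper bound (equivalently, local Lipschitz continuity of a convex function on $\mathrm{int}\,\mathrm{dom}\,f$) is the genuinely nontrivial ingredient here and is essentially what \cite{WangLiWangYao2015} establish on the way to their Proposition~2.5, so quoting it means your proof is a repackaging of the cited result rather than an independent derivation --- which is perfectly acceptable, but worth being aware of.

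The one real gap is the sentence asserting that ``the bounded sequence lies in a compact subset of the interior of $\mathrm{dom}\,f$, so $\bar p\in\mathrm{int}\,\mathrm{dom}\,f$.'' That is an additional hypothesis, not a consequence of boundedness of $\{p_k\}$, and the proposition as literally stated fails without it: on $\mathcal{M}=\mathbb{R}$ take $f(x)=-\sqrt{1-x^2}$ for $|x|\le 1$ and $f(x)=+\infty$ otherwise, and $p_k=1-1/k$; then $f$ is convex and lower semicontinuous, $\{p_k\}$ is bounded and contained in $\mathrm{int}\,\mathrm{dom}\,f$, yet $s_k=f'(p_k)=p_k/\sqrt{1-p_k^2}\to+\infty$. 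So one must either assume $\mathrm{dom}\,f=\mathcal{M}$ (the situation in which the paper actually invokes the proposition, e.g.\ in Theorem~\ref{teo.Main}) or add the explicit hypothesis that the closure of $\{p_k\}$ is contained in $\mathrm{int}\,\mathrm{dom}\,f$. You clearly sensed this --- your closing remark about subgradients blowing up at boundary points is exactly the obstruction --- but the fix is to promote that containment from an assertion to a stated assumption; as written, the step is unjustified and the statement being proved is, strictly speaking, false in the generality claimed.
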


 The following lemma plays  an important role in the next sections. Its proof will be omitted here, but it can be obtained, with some minor technical adjustments, by using the Toponogov's theorem~\cite[p.161, Theorem 4.2]{Sakai1996} and following the ideas of~\cite[Lemma~3.2]{WangLiWangYao2015}; see also \cite{WangLiYao2015}.

\begin{lemma} \label{lemli}
Let $p\in\emph{int}\, \emph{dom} f$, $0\neq s\in \partial f(p)$, and let $\gamma:[0,+\infty)\rightarrow \mathcal{M}$ be the geodesic defined by $ {\gamma}(t)=\mbox{exp} _{p}\left(-ts/\left\|s\right\|\right).$
Then, for any $t\in[0,\infty)$ and $q\in \emph{dom} f$ there holds
\begin{multline*}
\cosh(\sqrt{|\kappa|}d(\gamma(t),q))\leq \cosh(\sqrt{|\kappa|}d(p,q))+\\
\sqrt{|\kappa|}\cosh(\sqrt{|\kappa|}d(p,q))\sinh(t\sqrt{|\kappa|})\left[\frac{t}{2}-\frac{\tanh(\sqrt{|\kappa|}d(p,q))}{\sqrt{|\kappa|}d(p,q)} \frac{f(p)-f(q)}{\left\|s\right\|}\right]
\end{multline*}
and, consequently, the following inequality  holds 
$$
d^2({\gamma}(t),q)\leq d^2(p,q) + \frac{\sinh\left(\sqrt{|\kappa|}t\right)}{\sqrt{|\kappa|}} \left[\frac{\sqrt{|\kappa|}d(p,q)}{\tanh\left(\sqrt{|\kappa|}d(p,q)\right)}t-\frac{2}{\left\|s\right\|}\left(f(p)-f(q)\right)\right]. 
$$
\end{lemma}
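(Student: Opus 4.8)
\noindent\emph{Proof plan.} Write $\lambda:=\sqrt{|\kappa|}$, $D:=d(p,q)$ and $r:=d(\gamma(t),q)$. The plan is to regard $p$, $q$ and $\gamma(t)$ as the vertices of a geodesic triangle and to compare it with a triangle in the two-dimensional model space of constant curvature $\kappa<0$ (the hyperbolic plane of curvature $\kappa$). Concretely, let $\sigma$ be a minimal geodesic segment joining $p$ to $q$; since $p\in\emph{int}\,\emph{dom}f$, $q\in\emph{dom}f$ and $\emph{dom}f$ is weakly convex, I may choose $\sigma\subseteq\emph{dom}f$, which is what makes the subgradient inequality applicable later. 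Because $\|\gamma'\|\equiv1$, the segment $\gamma|_{[0,t]}$ has length $t$, so the hinge at $p$ is formed by two sides of lengths $D$ and $t$ with some included angle $\theta$, the side opposite to $\theta$ being $r$.

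First I would apply Toponogov's theorem in its hinge form. As the sectional curvature of $\mathcal{M}$ is bounded below by $\kappa$ and $\sigma$ is minimal, the third side $r$ is no larger than the corresponding side of the comparison hinge with the same data $(D,t,\theta)$; writing the hyperbolic law of cosines for that comparison hinge and using monotonicity of $\cosh$ gives
\[
\cosh(\lambda r)\le \cosh(\lambda t)\cosh(\lambda D)-\sinh(\lambda t)\sinh(\lambda D)\cos\theta .
\]
Next I would bound $\cos\theta$. With $v$ the unit initial velocity of $\sigma$ one has $\cos\theta=\langle -s/\|s\|,\,v\rangle$, and evaluating \eqref{eq:defSugrad} along $\sigma\in\Gamma^{f}_{pq}$ (parametrised on $[0,1]$, so that its initial velocity is $Dv$) yields $\langle s,v\rangle\le (f(q)-f(p))/D$, whence $\cos\theta\ge (f(p)-f(q))/(\|s\|D)$.

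Since $\sinh(\lambda t)\sinh(\lambda D)\ge0$ for $t\ge0$, inserting this lower bound for $\cos\theta$ into the comparison inequality preserves the inequality and bounds $\cosh(\lambda r)$ from above by an expression containing $f(p)-f(q)$. The first displayed inequality of the lemma then follows from the one-variable estimate $\cosh x\le 1+\tfrac{x}{2}\sinh x$ (which holds for $x\ge0$: both sides agree at $0$ and the derivative of the difference is $\tfrac12(x\cosh x-\sinh x)\ge0$), used with $x=\lambda t$ to replace $\cosh(\lambda t)\cosh(\lambda D)$ by $\cosh(\lambda D)\bigl(1+\tfrac{\lambda t}{2}\sinh(\lambda t)\bigr)$, together with the identity $\cosh(\lambda D)\tanh(\lambda D)=\sinh(\lambda D)$ to recast the term carrying $f(p)-f(q)$ in the stated form.

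The second inequality is the delicate step: the naive bound $\cosh(\lambda r)\ge 1+\tfrac{\lambda^2r^2}{2}$ loses too much and does not reproduce the factor $\lambda D/\tanh(\lambda D)$. Instead I would use the convexity of $g(x):=\cosh(\lambda\sqrt{x})$ on $[0,\infty)$: a direct computation gives $g'(x)=\tfrac{\lambda}{2\sqrt{x}}\sinh(\lambda\sqrt{x})$, which is nondecreasing because $u\mapsto \sinh(\lambda u)/u$ is nondecreasing, so $g$ lies above its tangent line at $x=D^2$, i.e.
\[
\cosh(\lambda r)=g(r^2)\ge \cosh(\lambda D)+\frac{\lambda\sinh(\lambda D)}{2D}\,\bigl(r^2-D^2\bigr).
\]
Combining this with the first displayed inequality, cancelling $\cosh(\lambda D)$, dividing by the positive factor $\lambda\sinh(\lambda D)/(2D)$, and once more using $\cosh(\lambda D)\tanh(\lambda D)=\sinh(\lambda D)$ yields precisely the claimed squared-distance inequality. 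The main obstacle I anticipate is the careful invocation of Toponogov's theorem---one must use a \emph{minimal} $\sigma$ in the comparison, note that only the length $t$ of $\gamma|_{[0,t]}$ (not its minimality) enters, and keep $\sigma\subseteq\emph{dom}f$ so that \eqref{eq:defSugrad} applies; these are exactly the ``minor technical adjustments'' to \cite[Lemma~3.2]{WangLiWangYao2015} mentioned above, the remaining two hyperbolic estimates being routine.
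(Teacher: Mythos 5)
Your argument is correct and follows exactly the route the paper indicates for this lemma (whose proof is omitted there): the hinge version of Toponogov's theorem combined with the hyperbolic law of cosines, the subgradient inequality along a minimal geodesic in $\emph{dom}f$ to bound $\cos\theta$, and the elementary estimates $\cosh x\le 1+\tfrac{x}{2}\sinh x$ and the tangent-line bound for the convex function $x\mapsto\cosh(\lambda\sqrt{x})$ to pass from the $\cosh$ inequality to the squared-distance inequality. I verified that the algebra in both displayed inequalities checks out, so no gaps remain beyond the standard caveat that the quotient $\tanh(\sqrt{|\kappa|}d(p,q))/(\sqrt{|\kappa|}d(p,q))$ is to be read as $1$ when $p=q$.
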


The next   concept will be useful in the analysis of the sequence generated by the subgradient method.

\begin{definition} \label{def:QuasiFejer}
A sequence $\{y_k\}$ in the complete metric space $(\mathcal{M},d)$ is quasi-Fej\'er convergent to a set $W\subset \mathcal{M}$ if, for every $w\in W$, there exists a sequence $\{\epsilon_k\}\subset\mathbb{R}$ such that $\epsilon_k\geq 0$, $\sum_{k=1}^{\infty}\epsilon_k<+\infty$, and
$$
d^2(y_{k+1},w)\leq d^2(y_k,w)+\epsilon_k, \qquad \forall~k.
$$
\end{definition}

The main property of the  quasi-Fej\'er convergent sequence is stated  in the next result, and its proof is similar to the one proved in \cite{burachik1995full}  by replacing the Euclidean  by the Riemannian distance.

\begin{theorem}\label{teo.qf}
Let $\{y_k\}$ be a sequence in the complete metric space $(\mathcal{M},d)$. If $\{y_k\}$ is quasi-Fej\'er convergent to a nomempty set $W\subset \mathcal{M}$, then $\{y_k\}$ is bounded. If furthermore, a cluster point $y$ of $\{y_k\}$ belongs to $W$, then $\lim_{k\rightarrow\infty}y_k=y$.
\end{theorem}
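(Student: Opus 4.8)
The plan is to establish the two assertions separately, both by iterating the defining inequality of quasi-Fej\'er convergence in Definition~\ref{def:QuasiFejer}. For the boundedness, I would fix an arbitrary point $w\in W$ (which exists since $W$ is nonempty) together with the associated nonnegative summable sequence $\{\epsilon_k\}$ satisfying $d^2(y_{k+1},w)\le d^2(y_k,w)+\epsilon_k$. Telescoping this inequality over the first $k$ indices yields $d^2(y_{k+1},w)\le d^2(y_1,w)+\sum_{j=1}^{\infty}\epsilon_j$, a bound that is uniform in $k$ because the series converges. Hence every $y_k$ lies in a fixed closed ball centered at $w$ with finite radius, so $\{y_k\}$ is bounded.

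For the second assertion, I would invoke the quasi-Fej\'er property with the particular choice $w=y$, where $y\in W$ is the given cluster point, obtaining a nonnegative summable sequence $\{\epsilon_k\}$ with $a_{k+1}\le a_k+\epsilon_k$, where $a_k:=d^2(y_k,y)$. The crux is to upgrade ``bounded'' to ``convergent'': the sequence $\{a_k\}$ need not be monotone because of the perturbation terms $\epsilon_k$. The standard device is to set $r_k:=\sum_{j=k}^{\infty}\epsilon_j$, which is finite and satisfies $r_k\to 0$ since the series converges, and to consider $b_k:=a_k+r_k$. Then $b_{k+1}=a_{k+1}+r_{k+1}\le a_k+\epsilon_k+r_{k+1}=a_k+r_k=b_k$, so $\{b_k\}$ is nonincreasing and bounded below by $0$, hence convergent; since $r_k\to 0$, the sequence $a_k=b_k-r_k$ converges to the same nonnegative limit $L$.

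Finally, I would identify $L$ using the cluster-point hypothesis. Because $y$ is a cluster point of $\{y_k\}$, some subsequence $\{y_{k_i}\}$ converges to $y$, and therefore $a_{k_i}=d^2(y_{k_i},y)\to 0$. Since the full sequence $\{a_k\}$ converges to $L$, so does its subsequence $\{a_{k_i}\}$, forcing $L=0$. Thus $d^2(y_k,y)\to 0$, that is, $\lim_{k\to\infty}y_k=y$, which completes the argument. The only step requiring genuine care is the passage from boundedness to convergence of $\{a_k\}$, handled by the tail-sum trick above; all remaining computations are a direct transcription of the Euclidean proof with the Euclidean norm replaced by the Riemannian distance $d$, precisely as indicated before the statement.
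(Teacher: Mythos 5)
Your argument is correct and is exactly the standard quasi-Fej\'er argument that the paper invokes by reference to \cite{burachik1995full}, with the Euclidean norm replaced by the Riemannian distance: telescoping plus summability for boundedness, and the tail-sum monotonization $b_k:=d^2(y_k,y)+\sum_{j\ge k}\epsilon_j$ to get convergence of $d^2(y_k,y)$, whose limit is then forced to be $0$ by the convergent subsequence. No gaps; this matches the intended proof.
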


We end this section by recalling the concept of  Lipschitz continuity of a function.  A  proper function $f: \mathcal{M} \to\overline{\mathbb{R}}$ is said to be {\it Lipschitz continuous with  constant $\tau \geq 0$ }  in $\Omega \subset \mathcal{M} $ if $ \left |f(p)- f(q)\right |\leq \tau\, d(p,q)$, for any   $ p,q\in \Omega$.

\section{ Iteration-Complexity of the Subgradient Method}\label{sec:subgradient}
In this section, we state the    Riemannian subgradient method   to solve \eqref{eq:OptP} and the strategies for  choosing  the step-size that will be used in our analysis. Let  $f: \mathcal{M}\to  \overline{\mathbb{R}}$  be   convex function, $\Omega^*$  be the {\it solution set} of   the problem \eqref{eq:OptP} and $f^*\coloneqq \inf_{x\in \mathcal{M}}f(x)$   be the  {\it optimum value}  of $f$. {\it In  our analysis we do not assume that   $\Omega^*$  is nonempty,  except   when explicitly stated}.  The  statement  of    {\it Riemannian  subgradient algorithm} to solve the problem \eqref{eq:OptP} is as follows.\\
\begin{algorithm}[H]
\begin{description}
\item[ Step 0.] Let $p_0 \in \emph{int}\, \emph{dom} f$. Set $k=0$.
\item[ Step 1.] If $s_k=0$, then {\bf stop}; otherwise,  choose a step-size $t_k>0$, $s_{k}\in \partial f(p_k)$ and compute
\begin{equation} \label{eq:SubGradMethod}
p_{k+1}:=\exp_{p_{k}}\left(-t_{k} \frac{s_{k}}{\|s_{k}\|}\right);
\end{equation}
\item[ Step 2.]  Set $k\leftarrow k+1$ and proceed to  \textbf{Step~1}.
\end{description}
\caption{Subgradient algorithm in a Riemanian manifold  $\mathcal{M}$}
\label{alg:subgradient}
\end{algorithm}
In the following we present two  different strategies for  choosing  the step-size $t_k>0$ in Algorithm~\ref{alg:subgradient}.  
\begin{strategy}[Exogenous step-size]\label{Exogenous.Step} 
\begin{equation}\label{step-size}
t_k> 0, \qquad \qquad  \sum_{k=0}^{\infty}t_k=+\infty, \qquad \qquad  \sigma:=\sum_{k=0}^{\infty} t_k^2<+\infty. 
\end{equation}
\end{strategy}
The step-size in Strategy~\ref{Exogenous.Step} have been used   in several paper  for analyzing subgradient method; see, for example,  \cite{FerreiraOliveira1998, CorreaLemarechal1993, Wang2018}. 
\begin{strategy}[Polyak's  step-size]\label{Poliak.Step} 
 Assume that  $p_0 \in \emph{int}\, \emph{dom} f$, $\Omega^* \neq \varnothing$ an set 
\begin{equation}\label{StepsizePolyak}
t_k=\alpha\frac{f(p_k)-f^*}{\left\|s_k\right\|},\qquad 0< \alpha<2\frac{\tanh\left(\sqrt{|\kappa|}d_0 \right)}{ \sqrt{|\kappa|}d_0}, \qquad  d_0:=d(p_0, \Omega^*), 
\end{equation}
where  $d(p_0, \Omega^*):=\inf\{ d(p_0, q);~q\in \Omega^*\}>0$.
\end{strategy}
This step-size  in  Strategy~\ref{Poliak.Step}  was introduced in \cite{Poljak1978}  and has been used in \cite{BentoFerreiraMelo2017, BentoMelo2012, WangLiWangYao2015 }. 

\begin{remark}
Since the function $ (0,+\infty)\mapsto \tanh(t)/t$ is decreasing,  then given an estimate  $\hat{d}> d_0$ we can chose 
$0< \alpha <2 \tanh(\sqrt{|\kappa|}\hat{d} )/(\sqrt{|\kappa|}\hat{d}))$ in Strategy~\ref{Poliak.Step}. 
For Riemannian manifold with non-negative curvature, the  second inequality in \eqref{StepsizePolyak} holds for all   $\kappa <0$. Due to $\lim_{t\to 0}\tanh(t)/t=1$,  letting  $\kappa$ goes to $0$, we can chose $0<\alpha<2$.
\end{remark}

{\it From now on we assume that the sequence $\{p_k\}$ generated by   Algorithm~\ref{alg:subgradient}  with the  two above strategies  for choosing the step-size   is well defined and is infinite.}

\begin{remark}
Note that if  $\emph{dom} f=\mathcal{M}$ then  $\partial f(p) \neq \varnothing$, for all $p\in \mathcal{M}$ and, consequently,  the sequence  $\{p_k\}$  is well defined. In \cite[Theorem 3.1]{Wang2018} an asymptotic convergence analysis was established,  under the assumption that  suitable sets  are contained  in $\emph{int}\, \emph{dom} f$ and that the set $\Omega^*$ is nonempty. The author proves that the sequence generated by the Algorithm~\ref{alg:subgradient} is well defined and converges to an element of $\Omega^*$.   It is worth to pointed out that our asymptotic convergence analysis of Algorithm~\ref{alg:subgradient},  with Strategy~\ref{Exogenous.Step} for choosing the step-size,   we do not assume that $\Omega^*$ is nonempty. In this sense,  our results  improve the ones  of \cite[Theorem~3.1]{Wang2018}.  
\end{remark}

\subsection{Subgradient Method with Exogenous Stepsize} \label{Sec:aca}
In this section  we assume that the sequence $\{p_k\}$  is generated by   Algorithm~\ref{alg:subgradient}  with  Strategy~\ref{Exogenous.Step}  for choosing the step-size. To proceed with the analysis of \st{the} Algorithm~\ref{alg:subgradient}  we  need some preliminaries. Firstly we define
\begin{equation} \label{eq;DefOmega}
\Omega:=\left\{q\in \mathcal{M}:~f(q)\leq \inf_kf(p_k)\right\}.
\end{equation}
Note that $\Omega \subset \emph{dom} f$.  It is worth mentioning that, in principle,  the set $\Omega$ can be empty. Our first task is to prove that the sequence  $\{p_k\}$  is bounded. 
\begin{lemma}\label{lem:bounded}
 If $\Omega \neq \varnothing$ then, for each  $q\in \Omega$ there  holds
\begin{equation}\label{def:dqkapa}
d(p_{k+1},q)\leq \frac{1}{\sqrt{|\kappa|}}\cosh^{-1}\left(\cosh \left(\sqrt{|\kappa|}d(p_0,q)\right)e^{\frac{1}{2}\sqrt{\sigma |\kappa|}\sinh\left(\sqrt{\sigma |\kappa|}\right)}\right), 
\end{equation}
for all $ k=0, 1, \ldots.$.
\end{lemma}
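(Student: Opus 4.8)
The plan is to apply Lemma~\ref{lemli} at each iterate and convert the resulting one-step estimate into a multiplicative recursion that telescopes. Fix $q\in\Omega$. Since the generated sequence is infinite, the stopping criterion is never triggered, so $s_k\neq 0$ for all $k$ and Lemma~\ref{lemli} applies with $p=p_k$, $s=s_k$ and $t=t_k$; by \eqref{eq:SubGradMethod} the associated geodesic satisfies $\gamma(t_k)=p_{k+1}$. The first inequality of Lemma~\ref{lemli} then reads
\begin{multline*}
\cosh(\sqrt{|\kappa|}d(p_{k+1},q)) \leq \cosh(\sqrt{|\kappa|}d(p_k,q)) \\
+ \sqrt{|\kappa|}\cosh(\sqrt{|\kappa|}d(p_k,q))\sinh(t_k\sqrt{|\kappa|})\left[\frac{t_k}{2} - \frac{\tanh(\sqrt{|\kappa|}d(p_k,q))}{\sqrt{|\kappa|}d(p_k,q)}\frac{f(p_k)-f(q)}{\|s_k\|}\right].
\end{multline*}
The definition~\eqref{eq;DefOmega} of $\Omega$ gives $f(p_k)-f(q)\geq 0$; since the coefficient multiplying this difference is nonnegative (interpreting $\tanh(x)/x$ as its limit $1$ in case $p_k=q$), I discard that term and factor out $\cosh(\sqrt{|\kappa|}d(p_k,q))$ to obtain
\[
\cosh(\sqrt{|\kappa|}d(p_{k+1},q)) \leq \cosh(\sqrt{|\kappa|}d(p_k,q))\left[1 + \frac{\sqrt{|\kappa|}t_k}{2}\sinh(t_k\sqrt{|\kappa|})\right].
\]

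Next I would pass to an exponential bound. Writing $a_k:=\cosh(\sqrt{|\kappa|}d(p_k,q))$ and using $1+x\leq e^x$ on the bracketed factor turns the one-step estimate into $a_{k+1}\leq a_k\,\exp\bigl(\tfrac{1}{2}\sqrt{|\kappa|}\,t_k\sinh(t_k\sqrt{|\kappa|})\bigr)$. Iterating from $0$ to $k$ yields
\[
a_{k+1} \leq a_0\,\exp\left(\sum_{j=0}^{k}\frac{\sqrt{|\kappa|}}{2}\,t_j\sinh(t_j\sqrt{|\kappa|})\right).
\]

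The main obstacle is bounding this sum by a quantity independent of $k$: the crude estimate $\sinh(t_j\sqrt{|\kappa|})\leq\sinh(\sqrt{\sigma|\kappa|})$ would leave the factor $\sum_j t_j$, which diverges by Strategy~\ref{Exogenous.Step}. The key is instead to exploit the monotonicity of $x\mapsto\sinh(x)/x$ on $(0,\infty)$ together with the bound $t_j\leq\sqrt{\sigma}$ (valid since $t_j^2\leq\sum_i t_i^2=\sigma$). This gives $\sinh(t_j\sqrt{|\kappa|})\leq t_j\sqrt{|\kappa|}\,\sinh(\sqrt{\sigma|\kappa|})/\sqrt{\sigma|\kappa|}$, converting one factor $t_j$ into $t_j^2$ so that summability of $\{t_j^2\}$ can be invoked:
\[
\sum_{j=0}^{k}\frac{\sqrt{|\kappa|}}{2}\,t_j\sinh(t_j\sqrt{|\kappa|}) \leq \frac{|\kappa|}{2}\,\frac{\sinh(\sqrt{\sigma|\kappa|})}{\sqrt{\sigma|\kappa|}}\sum_{j=0}^{k}t_j^2 \leq \frac{1}{2}\sqrt{\sigma|\kappa|}\,\sinh(\sqrt{\sigma|\kappa|}),
\]
where the last step uses $\sum_j t_j^2\leq\sigma$ and the simplification $|\kappa|\sigma/\sqrt{\sigma|\kappa|}=\sqrt{\sigma|\kappa|}$. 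Substituting back gives $\cosh(\sqrt{|\kappa|}d(p_{k+1},q))\leq\cosh(\sqrt{|\kappa|}d(p_0,q))\,e^{\frac{1}{2}\sqrt{\sigma|\kappa|}\sinh(\sqrt{\sigma|\kappa|})}$, and finally applying the increasing function $\cosh^{-1}$ and dividing by $\sqrt{|\kappa|}$ produces exactly~\eqref{def:dqkapa}.
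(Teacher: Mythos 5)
Your proof is correct and follows essentially the same route as the paper's: both start from the first inequality of Lemma~\ref{lemli}, drop the nonnegative term involving $f(p_k)-f(q)$, use the monotonicity of $x\mapsto\sinh(x)/x$ together with $t_k\leq\sqrt{\sigma}$ to trade $t_k\sinh(t_k\sqrt{|\kappa|})$ for a multiple of $t_k^2$, and then apply $1+x\leq e^x$ and the summability of $\{t_k^2\}$ to telescope. The only (immaterial) difference is the order of the last two steps: the paper bounds the bracket by $1+at_k^2$ first and then exponentiates, while you exponentiate first and bound the resulting sum.
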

\begin{proof}
Applying  first inequality of Lemma~\ref{lemli} with $t=t_k$, $p=p_k$  and  $p_{k+1}=\gamma(t_k)$ and taking into account that  $q\in\Omega$ we  conclude that   
$$
\cosh\left(\sqrt{|\kappa|}d(p_{k+1},q)\right)\leq \cosh\left(\sqrt{|\kappa|}d(p_k,q)\right)\left[1+ |\kappa| t_k^2 \frac{\sinh\left(\sqrt{|\kappa|} t_k\right)}{2 \sqrt{|\kappa|}t_k}\right],  \qquad k=0, 1, \ldots.
$$
Using definition of $\sigma$ in  \eqref{step-size} we have $ t_k \leq \sqrt{\sigma}$, for all $k=0, 1, \ldots$.  Since  the map  $ (0, +\infty) \ni t \mapsto  \sinh(t)/t$ is increasing, it follows from the last inequality  that 
$$
\cosh\left(\sqrt{|\kappa|}d(p_{k+1},q)\right)\leq \cosh\left(\sqrt{|\kappa|}d(p_k,q)\right)\left[1+at_k^2\right], \qquad k=0, 1, \ldots, 
$$
where  $a:= \sqrt{|\kappa|}(\sinh(\sqrt{\sigma|\kappa|}))/(2\sqrt{\sigma})$.  Note  that  the last inequality implies that 
$$
\cosh\left(\sqrt{|\kappa|}d(p_{k+1},q)\right)\leq \cosh\left(\sqrt{|\kappa|}d(p_k,q)\right)e^{at_k^2}, \qquad k=0, 1, \ldots.
$$
Therefore, we have  $\cosh (\sqrt{|\kappa|}d(p_{k+1},q))\leq\cosh(\sqrt{|\kappa|}d(p_0,q))e^{a\sigma}$, which is equivalent to \eqref{def:dqkapa} and the proof is concluded.
\end{proof}
In the next result we apply Lemmas~\ref {lemli} and \ref{lem:bounded} to  derive an inequality that plays an important role in our analysis, which  is a generalization  of the one obtained  in  \cite[Lemma 4.1]{FerreiraOliveira1998}.   In the linear setting, this  inequality is of fundamental importance to analyze the subgradient method; see, for example, \cite{CorreaLemarechal1993}. It is worth noting that  it was obtained in \cite{Yang2010}  for an specific  function, namely, the mean function. For stating the next result, for each $q\in\Omega$,  we define
\begin{equation} \label{eq:cqk}
C_{q,\kappa}:=  \frac{\sinh\left(\sqrt{\sigma\left|\kappa\right|}\right)}{ \sqrt{\sigma\left|\kappa\right|}}\left[1+ \cosh^{-1}\left(\cosh(\sqrt{\left|\kappa\right|}d(p_0,q))e^{\frac{1}{2}\sqrt{\sigma \left|\kappa\right|}\sinh\left(\sqrt{\sigma \left|\kappa\right|}\right)}\right)\right].
\end{equation} 
It is important to note that  $C_{q,\kappa}$ is well defined only  under the assumption $\Omega \neq \varnothing$.
\begin{lemma}\label{lem:MainIneq}
If $\Omega \neq \varnothing$ then, for each  $q\in \Omega$ there  holds
$$
d^2(p_{k+1},q)\leq d^2(p_k,q) + C_{q,\kappa} t_k^2+2\,\frac{t_k}{\left\|s_k\right\|}[f(q)-f(p_k)],  \qquad  s_{k}\in \partial f(p_k), \qquad k=0,1,\ldots.
$$
\end{lemma}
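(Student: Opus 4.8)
The plan is to start directly from the second inequality of Lemma~\ref{lemli}, applied with $p=p_k$, $s=s_k$, $t=t_k$, and $\gamma(t_k)=p_{k+1}$, which reads
$$
d^2(p_{k+1},q)\le d^2(p_k,q)+\frac{\sinh(\sqrt{|\kappa|}t_k)}{\sqrt{|\kappa|}}\left[\frac{\sqrt{|\kappa|}d(p_k,q)}{\tanh(\sqrt{|\kappa|}d(p_k,q))}t_k-\frac{2}{\|s_k\|}(f(p_k)-f(q))\right].
$$
I would then treat the two summands inside the bracket separately, bounding the ``drift'' term by $C_{q,\kappa}t_k^2$ and the ``descent'' term by the desired function-difference expression.

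For the descent term I would use that $\sinh(x)\ge x$ for all $x\ge 0$, so that $\sinh(\sqrt{|\kappa|}t_k)/\sqrt{|\kappa|}\ge t_k$; since $q\in\Omega$ forces $f(p_k)-f(q)\ge 0$, multiplying the nonnegative quantity $\frac{2}{\|s_k\|}(f(p_k)-f(q))$ by the larger factor can only make the negative contribution more negative, giving
$$
-\frac{\sinh(\sqrt{|\kappa|}t_k)}{\sqrt{|\kappa|}}\frac{2}{\|s_k\|}(f(p_k)-f(q))\le -2\frac{t_k}{\|s_k\|}(f(p_k)-f(q))=2\frac{t_k}{\|s_k\|}(f(q)-f(p_k)),
$$
which is exactly the last term of the claimed inequality.

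For the drift term I would factor out $t_k^2$, writing it as $t_k^2\cdot\frac{\sinh(\sqrt{|\kappa|}t_k)}{\sqrt{|\kappa|}t_k}\cdot\frac{\sqrt{|\kappa|}d(p_k,q)}{\tanh(\sqrt{|\kappa|}d(p_k,q))}$, and then bound each of the two factors by a constant. Since $x\mapsto\sinh(x)/x$ is increasing and $t_k\le\sqrt{\sigma}$ by the definition of $\sigma$, the first factor is at most $\sinh(\sqrt{\sigma|\kappa|})/\sqrt{\sigma|\kappa|}$. For the second factor I would invoke the elementary hyperbolic inequality $u/\tanh(u)\le 1+u$ for $u\ge 0$ together with the uniform bound on $\sqrt{|\kappa|}d(p_k,q)$ furnished by Lemma~\ref{lem:bounded}, namely $\sqrt{|\kappa|}d(p_k,q)\le\cosh^{-1}(\cosh(\sqrt{|\kappa|}d(p_0,q))e^{\frac12\sqrt{\sigma|\kappa|}\sinh(\sqrt{\sigma|\kappa|})})$. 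Multiplying the two resulting bounds reproduces precisely the constant $C_{q,\kappa}$ of \eqref{eq:cqk}, so the drift term is at most $C_{q,\kappa}t_k^2$; combining this with the descent estimate yields the claim.

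The main obstacle I anticipate is verifying the hyperbolic inequality $u/\tanh(u)\le 1+u$ for $u\ge 0$ and confirming that combining it with Lemma~\ref{lem:bounded} produces exactly the factor $[1+\cosh^{-1}(\cdots)]$ appearing in $C_{q,\kappa}$; everything else reduces to the monotonicity of $\sinh(x)/x$ and the sign constraint $f(p_k)\ge f(q)$. The inequality $u/\tanh(u)\le 1+u$ follows from equality at $u=0$ together with a comparison of derivatives (equivalently, from $\tanh(u)(1+u)\ge u$), and this is the single place where a short analytic argument is genuinely needed rather than a direct appeal to an earlier result.
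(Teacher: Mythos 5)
Your proposal is correct and follows essentially the same route as the paper's proof: both start from the second displayed inequality of Lemma~\ref{lemli}, handle the descent term via the sign condition $f(q)\le f(p_k)$ together with $\sinh(x)/x\ge 1$, and bound the drift term using the monotonicity of $x\mapsto\sinh(x)/x$ with $t_k\le\sqrt{\sigma}$, the inequality $u/\tanh(u)\le 1+u$, and the uniform distance bound from Lemma~\ref{lem:bounded} to arrive at $C_{q,\kappa}$. The only cosmetic difference is that you split the bracket into two terms immediately, whereas the paper carries the common $\sinh$ factor one step further before separating them.
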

\begin{proof}
Applying  first inequality of Lemma~\ref{lemli} with $t=t_k$, $p=p_k$  and  $p_{k+1}=\gamma(t_k)$, and taking into account that  $q\in\Omega$, we  conclude that  
$$
d^2(p_{k+1},q)\leq d^2(p_k,q) +
 \frac{\sinh\left(\sqrt{|\kappa|}t_k\right)}{\sqrt{|\kappa|}t_k}\left[\frac{\sqrt{|\kappa|}d(p_k,q)}{\tanh\left(\sqrt{|\kappa|}d(p_k,q)\right)} t_k^2+
\frac{2t_k}{\|s_k\|}\left[f(q)-f(p_k)\right]\right], 
$$
for all $k=0,1,\ldots$.   On the other hand,     $t/\tanh(t)\leq 1+t$, for all $t\geq 0$ and the map $ (0, +\infty) \ni t \mapsto  \sinh(t)/t$ is increasing and bounded below by $1$. Thus, taking into account  that $ t_k \leq \sqrt{\sigma}$  and   $f(q)-f(p_k)\leq 0$, for all $k=0,1, \ldots$, we conclude that 
$$
d^2(p_{k+1},q)\leq d^2(p_k,q) + \frac{\sinh\left(\sqrt{\sigma|\kappa|}\right)}{\sqrt{\sigma|\kappa|}} \left[1+ \sqrt{|\kappa|} d(p_k,q)\right] t^2_k+ 
\frac{2t_k}{\|s_k\|}\left[f(q)-f(p_k)\right], 
$$
for all $k=0,1,\ldots$. Therefore,  combining   Lemma~\ref{lem:bounded}  with  \eqref{eq:cqk} the desired inequality follows and the proof is concluded. 
\end{proof}

\begin{remark}
For Riemannian manifold with non-negative curvature, the inequality in Lemma~\ref{lem:MainIneq} holds for all $\kappa<0$.  Since   $\lim_{\kappa \to 0}C_{q,\kappa}=1$,  the inequality of the Lemma~\ref{lem:MainIneq}  merges into  the inequality \cite[Lemma 4.1]{FerreiraOliveira1998}. 
\end{remark}
y
Now we ready to prove the main result of this section.
\begin{theorem}  \label{eq:icomp}
Assume that  $\Omega^* \neq \varnothing$ and    $f: \mathcal{M}\to  \overline{\mathbb{R}}$  is   Lipschitz continuous with  constant $\tau \geq 0$.  Then,   for all $p_*\in \Omega^*$ and  every $N\in \mathbb{N}$,  the following inequality holds 
\begin{equation}\label{eq:complexityExogenous}
\min \left\{f(p_k) - f^*~:~ k=0, 1,\ldots, N\right\}\leq \tau \frac{d^2(p_0, p_*) + C_{p_*,\kappa}\sum_{k=0}^{N} t_k^2}{2\sum_{k=0}^{N} t_k}.
\end{equation}
\end {theorem}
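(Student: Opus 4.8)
The plan is to specialize the descent-type inequality of Lemma~\ref{lem:MainIneq} at an optimal point and then telescope the resulting relation. The first thing I would observe is that the hypothesis $\Omega^* \neq \varnothing$ already forces $\Omega \neq \varnothing$: any $p_* \in \Omega^*$ satisfies $f(p_*) = f^* = \inf_{x \in \mathcal{M}} f(x) \leq \inf_k f(p_k)$, so by the definition \eqref{eq;DefOmega} we have $p_* \in \Omega$, i.e.\ $\Omega^* \subseteq \Omega$. Hence Lemma~\ref{lem:MainIneq} may be applied with $q = p_*$, and using $f(p_*) = f^*$ it yields, for each $k = 0, 1, \ldots$,
$$d^2(p_{k+1}, p_*) \leq d^2(p_k, p_*) + C_{p_*,\kappa}\, t_k^2 - 2\frac{t_k}{\|s_k\|}\bigl[f(p_k) - f^*\bigr].$$

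Next I would bound the subgradient norms. The Lipschitz hypothesis implies $\|s_k\| \leq \tau$ for every $k$: testing the subgradient inequality \eqref{eq:defSugrad} along the geodesic $t \mapsto \exp_{p_k}(t\, s_k/\|s_k\|)$ against a nearby point and comparing the lower bound so obtained with $|f(p)-f(q)| \leq \tau\, d(p,q)$ produces $\|s_k\| \leq \tau$ in the standard way. Since the sequence is assumed infinite, Step~1 of Algorithm~\ref{alg:subgradient} forces $s_k \neq 0$, so $0 < \|s_k\| \leq \tau$ and in particular $\tau > 0$, which legitimizes dividing by $\tau$. Because $f(p_k) - f^* \geq 0$, replacing $\|s_k\|$ by the larger quantity $\tau$ in the denominator only shrinks the subtracted term, so rearranging the displayed inequality gives
$$\frac{2}{\tau}\, t_k \bigl[f(p_k) - f^*\bigr] \leq 2\frac{t_k}{\|s_k\|}\bigl[f(p_k) - f^*\bigr] \leq d^2(p_k, p_*) - d^2(p_{k+1}, p_*) + C_{p_*,\kappa}\, t_k^2.$$

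I would then sum from $k = 0$ to $k = N$, telescoping the distance terms, and discard the nonnegative term $d^2(p_{N+1}, p_*)$ to obtain
$$\frac{2}{\tau}\sum_{k=0}^{N} t_k \bigl[f(p_k) - f^*\bigr] \leq d^2(p_0, p_*) + C_{p_*,\kappa}\sum_{k=0}^{N} t_k^2.$$
Finally, setting $m_N := \min\{f(p_k) - f^* : k = 0, \ldots, N\} \geq 0$ and using $f(p_k) - f^* \geq m_N$ together with $t_k > 0$ to bound the left-hand side below by $(2/\tau)\, m_N \sum_{k=0}^{N} t_k$, one solves for $m_N$ and recovers exactly \eqref{eq:complexityExogenous}.

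I expect the only genuinely non-routine step to be the passage from the Lipschitz hypothesis to the uniform bound $\|s_k\| \leq \tau$; everything else (the inclusion $\Omega^* \subseteq \Omega$, the telescoping, and the minimum bound) is the classical subgradient argument transcribed to the Riemannian distance via Lemma~\ref{lem:MainIneq}. I would also keep an eye on the degenerate case $\tau = 0$, which is implicitly ruled out here because an infinite sequence requires $s_k \neq 0$ and hence $\tau > 0$.
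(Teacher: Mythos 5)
Your proposal is correct and follows essentially the same route as the paper: the inclusion $\Omega^* \subseteq \Omega$, specialization of Lemma~\ref{lem:MainIneq} at $q = p_*$, the bound $\|s_k\| \leq \tau$ from Lipschitz continuity, telescoping the sum, and bounding below by the minimum. The extra care you take about the degenerate case $\tau = 0$ and the justification of $\|s_k\| \leq \tau$ are welcome details that the paper leaves implicit, but they do not change the argument.
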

\begin{proof}
Let $p_*\in \Omega^*$. Since $ \Omega^*  \subset \Omega$,    applying Lemma~\ref{lem:MainIneq} with $q=p_*$, we obtain 
$$
d^2(p_{k+1}, p_*)\leqslant d^2(p_k, p_*) + C_{p_*,\kappa}t_k^2 + 2 \frac{t_k}{\|s_k\|}[f^*-f(p_{k})],  \qquad  s_{k}\in \partial f(p_k), 
$$
for all $k=0,1,\ldots$. Hence, performing the sum of  the above inequality  for $k=0,1, \ldots, N$,  after some algebraic manipulations, we have
$$
 2\sum_{k=0}^{N}   \frac{t_k}{\|s_k\|}[f(p_{k})-f^*] \leq d^2(p_0, p_*)- d^2(p_{N+1}, p_*) + C_{p_*,\kappa}\sum_{k=0}^{N} t_k^2.
$$
Since $f$ is  Lipschitz continuous with  constant $\tau \geq 0$, we have $\|s_k \|\leq \tau$,  for all $k=0,1,\ldots$. Therefore, 
$$
  \frac{2}{\tau} \min\left\{f(p_k) - f^*:~ k=0, 1,\ldots, N\right\}\sum_{k=0}^{N} t_k \leq d^2(p_0, p_*) + C_{p_*,\kappa}\sum_{k=0}^{N} t_k^2, 
$$
which is 	equivalent to the desired inequality.
\end{proof}
\begin{remark}
Note that, for Riemannian manifold with non-negative curvature the inequality in \eqref{eq:complexityExogenous} holds for all $\kappa<0$.  Since   $\lim_{\kappa \to 0}C_{q,\kappa}=1$,  Theorem~\ref{eq:icomp}  is reduced to  \cite[Theorem 3.3]{BentoFerreiraMelo2017}. 
\end{remark}

We remark that in the first part of the next theorem we do not assume that $\Omega^* \neq \varnothing$. Additionally, It is worth to point out that   the second part was first obtained in \cite{ Wang2018}.  Since it is an immediate consequence of the first part and Lemma~\ref{lem:MainIneq}, we decide to include its proof here.

\begin{theorem}\label{teo.Main}
The following equality holds
\begin{equation} \label{eq:linfs}
\liminf_k f(p_k)=f^*.
\end{equation}
In addition, if $\Omega^* \neq \varnothing$ then the sequence $\{p_k\}$ converges to a point $p_*\in\Omega^*$.
\end{theorem}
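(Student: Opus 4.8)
The plan is to prove the two assertions in turn, establishing the equality \eqref{eq:linfs} with no hypothesis on $\Omega^*$ and only afterwards invoking $\Omega^* \neq \varnothing$ for convergence. Since every $p_k \in \mathcal{M}$, one always has $f^* \leq \inf_k f(p_k) \leq \liminf_k f(p_k)$, so for the first part it suffices to show $\liminf_k f(p_k) \leq f^*$. I would split according to whether the set $\Omega$ of \eqref{eq;DefOmega} is empty. If $\Omega = \varnothing$, then $f(q) > \inf_k f(p_k)$ for every $q \in \mathcal{M}$, whence $f^* \geq \inf_k f(p_k)$; combined with $f^* \leq \inf_k f(p_k)$ this forces $f^* = \inf_k f(p_k)$, and this common value is not attained along $\{p_k\}$ (otherwise that iterate would lie in $\Omega$). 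An elementary fact about real sequences—an unattained infimum coincides with the $\liminf$—then yields \eqref{eq:linfs} in this case.

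If instead $\Omega \neq \varnothing$, I would fix $q \in \Omega$, so that $f(q) \leq \inf_k f(p_k) \leq f(p_k)$ for all $k$. By Lemma~\ref{lem:bounded} the sequence $\{p_k\}$ is bounded, hence by Proposition~\ref{prop:subglim} the subgradients obey $\|s_k\| \leq M$ for some $M > 0$. Summing the inequality of Lemma~\ref{lem:MainIneq} over $k = 0,\ldots,N$, telescoping the distance terms and using $\sum_k t_k^2 \leq \sigma$, I obtain $\frac{2}{M}\sum_{k=0}^{N} t_k[f(p_k)-f(q)] \leq d^2(p_0,q) + C_{q,\kappa}\,\sigma$, with every summand nonnegative. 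Letting $N \to \infty$ and invoking $\sum_k t_k = +\infty$ forces $\liminf_k [f(p_k)-f(q)] = 0$, i.e. $\liminf_k f(p_k) = f(q)$; together with $f(q) \leq \inf_k f(p_k) \leq \liminf_k f(p_k)$ this gives $f(q) = \inf_k f(p_k) = \liminf_k f(p_k)$ for \emph{every} $q \in \Omega$. To identify this with $f^*$ I would argue by contradiction: were $f^* < \inf_k f(p_k)$, one could choose $\hat q$ with $f^* \leq f(\hat q) < \inf_k f(p_k)$, so that $\hat q \in \Omega$ yet $f(\hat q) \neq \inf_k f(p_k)$, contradicting the previous line; hence $\inf_k f(p_k) = f^*$ and \eqref{eq:linfs} holds.

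For the second part I assume $\Omega^* \neq \varnothing$ and fix $p_* \in \Omega^*$. Since $f(p_*) = f^* \leq \inf_k f(p_k)$, the point $p_*$ lies in $\Omega$, so Lemma~\ref{lem:MainIneq} applies; discarding its nonpositive last term yields $d^2(p_{k+1},p_*) \leq d^2(p_k,p_*) + C_{p_*,\kappa}\,t_k^2$. As $\sum_k C_{p_*,\kappa}\,t_k^2 = C_{p_*,\kappa}\,\sigma < +\infty$, this exhibits $\{p_k\}$ as quasi-Fej\'er convergent to $\Omega^*$ in the sense of Definition~\ref{def:QuasiFejer}, so by Theorem~\ref{teo.qf} the sequence is bounded. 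By the first part there is a subsequence with $f(p_{k_j}) \to f^*$; boundedness extracts a further convergent subsequence $p_{k_{j_l}} \to \bar p$, and lower semicontinuity of $f$ gives $f(\bar p) \leq \liminf_l f(p_{k_{j_l}}) = f^* \leq f(\bar p)$, so $\bar p \in \Omega^*$. Thus $\{p_k\}$ has a cluster point in the target set $\Omega^*$, and the second assertion of Theorem~\ref{teo.qf} upgrades this to $\lim_k p_k = \bar p \in \Omega^*$.

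I expect the main obstacle to be not the geometry—the curvature estimate is entirely absorbed into Lemma~\ref{lem:MainIneq}—but the bookkeeping relating the three quantities $f^*$, $\inf_k f(p_k)$, and $\liminf_k f(p_k)$, which need not coincide a priori and behave differently according to whether the optimal value is attained. The case distinction on $\Omega$ together with the short contradiction argument pinning $\inf_k f(p_k)$ to $f^*$ is exactly what permits \eqref{eq:linfs} to be stated with no assumption on $\Omega^*$, thereby completing \cite[Theorem~3.1]{Wang2018}.
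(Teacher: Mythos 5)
Your proof is correct and rests on exactly the same ingredients as the paper's: Lemma~\ref{lem:bounded}, Proposition~\ref{prop:subglim} and the telescoped inequality of Lemma~\ref{lem:MainIneq} combined with $\sum_k t_k=+\infty$ and $\sum_k t_k^2<+\infty$ for the first part, then quasi-Fej\'er convergence plus Theorem~\ref{teo.qf} for the second. The only differences are organizational---you argue directly via a case split on $\Omega=\varnothing$ versus $\Omega\neq\varnothing$ where the paper argues by contradiction from $\liminf_k f(p_k)>f^*$, and you take the Fej\'er target to be $\Omega^*$ rather than $\Omega$---and your bookkeeping of the three quantities $f^*$, $\inf_k f(p_k)$ and $\liminf_k f(p_k)$ is if anything slightly more careful than the paper's.
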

\begin{proof} 
Assume by contradiction  that $\liminf_k f(p_k)>f^*.$ In this case, we have  $\Omega \neq \varnothing$. Thus, from Lemma~\ref{lem:bounded}, we conclude that $\{p_k\}$ is bounded and,   consequently, by using  Proposition~\ref{prop:subglim},  the sequence $\{s_k\}$ is also bounded. Let  $C_1>0$ such that  $\left\|s_k\right\|<C_1$, for  $k=0,1,\ldots$. On the other hand,  letting $q\in \Omega$, there exist $C_2>0$ and $k_0\in\mathbb{N}$ such that $f(q)<f(p_k)-C_2, $ for all  $k\geq k_0$.  Hence, using Lemma~\ref{lem:MainIneq} and considering that  $\left\|s_k\right\|<C_1$, for  $k=0,1,\ldots$,    we have
$$
d^2(p_{k+1},q)\leq d^2(p_k,q) + C_{q,\kappa}t^2_k-2\frac{C_2}{C_1}t_k, \qquad  k=k_0, k_0+1, \dots.
$$
Consider  $\ell \in \mathbb{N}$. Thus, from the last inequality, after some calculations, we conclude that
$$
\frac{2\,C_2}{C_1}\sum_{j=k_0}^{\ell+k_0}t_j \leq d^2(p_{k_0},q)  - d^2(p_{k_0+ \ell},q)  +  C_{q,\kappa} \sum_{j=k_0}^{\ell+k_0}t^2_j \leq d^2(p_{k_0},q) +  C_{q,\kappa} \sum_{j=k_0}^{\ell+k_0}t^2_j .
$$
Since the last inequality holds for all $\ell \in \mathbb{N}$,   then  using the inequality in \eqref{step-size} we have a contraction. Therefore, \eqref{eq:linfs} holds.

For proving the last statement, let us assume that $\Omega^*\neq\varnothing$. In this case, we have $\Omega\neq\varnothing$ and, from Lemma~\ref{lem:bounded}, the sequence $\{p_k\}$ is bounded. Moreover,  Lemma~\ref{lem:MainIneq} implies, in particular,  that $\{p_k\}$ is  quasi-F\'ejer convergent to $\Omega$.    The equality  \eqref{eq:linfs} implies  that $\{f(p_k)\}$ possesses a decreasing monotonous subsequence $\{f(p_{k_j})\}$ such that
$\lim_{j\rightarrow \infty}f(p_{k_j})= f^*.$ We can assume  that  $\{f(p_k)\}$ is decreasing, monotonous and converges to $f^*$. Being bounded, the sequence $\{p_k\}$ possesses a convergent subsequence $\{p_{k_\ell}\}$. Let us say that
$ \lim_{\ell\rightarrow\infty}p_{k_\ell}=p_*,$ which by the continuity of $f$ implies $f(p_*)=\lim_{\ell\rightarrow\infty}f(p_{k_\ell})=f^*,$
and then $p_*\in\Omega$.  Hence, $\{p_k\}$ has an cluster point $p_*\in\Omega$, and due to $\{p_k\}$ be  quasi-F\'ejer convergent to $\Omega$, it follows from Theorem~\ref{teo.qf} that the sequence $\{p_k\}$ converges to $p_*$. \end{proof}


\subsection{Subgradient Method with Polyak Stepsize}
In this section,  we assume that  $\Omega^* \neq \varnothing$ and  $\{p_k\}$  is generated by   Algorithm~\ref{alg:subgradient}  with  Strategy~\ref{Poliak.Step}  for choosing the step-size.  Let us define
\begin{equation} \label{eq:kd0}
 C_{\kappa, d_0}:=  \frac{2}{\alpha}-\frac{\sqrt{|\kappa|}d_0}{\tanh\left(\sqrt{|\kappa|}d_0\right)}>0, 
\end{equation} 
where $\alpha$ and $d_0$  are  defined in \eqref{StepsizePolyak}.

\begin{remark} \label{re:k0}
Since $\lim_{t\to 0} \tanh (t)/t=1$,  we conclude  that for  Riemannian manifolds with nonnegative curvature, namely, for $\kappa=0$,   \eqref{eq:kd0}  become $ C_{\kappa, d_0}\equiv 2/\alpha -1>0$.
\end{remark}

In the next result, we apply Lemma~\ref {lemli}  to obtain an inequality that plays an important role in our analysis.  Before state this result,  we set 
\begin{equation} \label{eq:ProjOmegaStar}
 {\bar q}\in \Omega^* \quad \mbox{such that} \quad d_0=d(p_0,{\bar q}).
\end{equation}

\begin{lemma}\label{lem:MainIneqPolyak}
Let ${\bar q}\in \Omega^*$ satisfying \eqref{eq:ProjOmegaStar}. Then  the following inequality holds
$$
d^2(p_{k+1}, {\bar q})\leq d^2(p_k, {\bar q}) - C_{\kappa, d_0}  \alpha^2 \frac{\left[f(p_k)-f^*\right]^2}{\left\|s_k\right\|^2}, \qquad k=0,1,\ldots.
$$
\end{lemma}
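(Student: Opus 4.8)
The plan is to derive the claimed descent inequality directly from the second (squared-distance) inequality in Lemma~\ref{lemli}, specializing it to the Polyak step-size. First I would apply that inequality with $p=p_k$, $q={\bar q}$, $s=s_k$, and $t=t_k=\gamma(t_k)$-parameter, so that $p_{k+1}=\gamma(t_k)$. This yields
\begin{equation*}
d^2(p_{k+1},{\bar q})\leq d^2(p_k,{\bar q})+\frac{\sinh\left(\sqrt{|\kappa|}t_k\right)}{\sqrt{|\kappa|}}\left[\frac{\sqrt{|\kappa|}d(p_k,{\bar q})}{\tanh\left(\sqrt{|\kappa|}d(p_k,{\bar q})\right)}t_k-\frac{2}{\|s_k\|}\left(f(p_k)-f^*\right)\right],
\end{equation*}
using $f({\bar q})=f^*$ since ${\bar q}\in\Omega^*$.

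The key idea is to control the bracketed term. I expect that the sequence $\{d(p_k,{\bar q})\}$ is nonincreasing, so $d(p_k,{\bar q})\leq d(p_0,{\bar q})=d_0$; this should be provable by induction on $k$ simultaneously with the descent inequality, since if the bracket is negative at step $k$ then $d(p_{k+1},{\bar q})\leq d(p_k,{\bar q})$. Because $t\mapsto t/\tanh(t)$ is increasing on $(0,\infty)$, the bound $d(p_k,{\bar q})\leq d_0$ gives
\begin{equation*}
\frac{\sqrt{|\kappa|}d(p_k,{\bar q})}{\tanh\left(\sqrt{|\kappa|}d(p_k,{\bar q})\right)}\leq \frac{\sqrt{|\kappa|}d_0}{\tanh\left(\sqrt{|\kappa|}d_0\right)}.
\end{equation*}
Substituting the Polyak step $t_k=\alpha(f(p_k)-f^*)/\|s_k\|$ into the bracket and factoring out $t_k/\|s_k\|$ (or rather $\alpha(f(p_k)-f^*)/\|s_k\|^2$) should produce exactly the factor $\frac{\sqrt{|\kappa|}d_0}{\tanh(\sqrt{|\kappa|}d_0)}\,\alpha-2=-\alpha\,C_{\kappa,d_0}$ multiplying $(f(p_k)-f^*)^2/\|s_k\|^2$, by the definition \eqref{eq:kd0} of $C_{\kappa,d_0}$.

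The remaining factor $\sinh(\sqrt{|\kappa|}t_k)/(\sqrt{|\kappa|}t_k)\geq 1$ appears in front, and since the bracket is negative, bounding this factor below by $1$ weakens the inequality in the correct direction, leaving
\begin{equation*}
d^2(p_{k+1},{\bar q})\leq d^2(p_k,{\bar q})-\alpha\,C_{\kappa,d_0}\,t_k\,\frac{f(p_k)-f^*}{\|s_k\|}=d^2(p_k,{\bar q})-C_{\kappa,d_0}\,\alpha^2\frac{[f(p_k)-f^*]^2}{\|s_k\|^2},
\end{equation*}
which is the claim. The main obstacle I anticipate is the bookkeeping around the induction establishing $d(p_k,{\bar q})\leq d_0$: one must verify that the very first step already contracts (using that the bracket is negative when $d(p_0,{\bar q})=d_0$ together with $C_{\kappa,d_0}>0$, guaranteed by the upper bound on $\alpha$ in \eqref{StepsizePolyak}), and then propagate monotonicity. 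A subtlety is the direction of the inequality $\sinh(x)/x\geq 1$: it is only harmless to drop this factor because the quantity it multiplies has already been shown to be nonpositive, so the argument must order these two facts carefully.
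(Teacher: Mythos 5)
Your proposal is correct and follows essentially the same route as the paper's proof: apply the second inequality of Lemma~\ref{lemli} with $q=\bar q$, establish $d(p_k,\bar q)\leq d_0$ by induction using the monotonicity of $t\mapsto t/\tanh(t)$ and the sign of the bracket forced by the bound on $\alpha$ in \eqref{StepsizePolyak}, substitute the Polyak step-size, and only then drop the factor $\sinh(\sqrt{|\kappa|}t_k)/(\sqrt{|\kappa|}t_k)\geq 1$ against the nonpositive term. The subtleties you flag (ordering the sign argument before bounding the $\sinh$ factor, and running the contraction and the induction together) are exactly the points the paper's proof handles, and in the same way.
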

\begin{proof}
First we are going to prove that $d(p_k, {\bar q})\leq d_0$,  for all $k=0,1,\ldots$. The proof will be made by induction. For $k=0$ is immediate. Assume that $d(p_k, {\bar q})\leq d_0$. Using the  second inequality of Lemma~\ref{lemli} with $q={\bar q}$,  $t=t_k$, $p=p_k$, $s=s_k$,  $p_{k+1}=\gamma(t_k)$ and  considering  that $f^*=f({\bar q})$, we obtain
$$
d^2(p_{k+1}, {\bar q})\leq d^2(p_k, {\bar q}) +
 \frac{\sinh\left(\sqrt{|\kappa|}t_k\right)}{\sqrt{|\kappa|}t_k}\left[\frac{\sqrt{|\kappa|}d(p_k, {\bar q})}{\tanh\left(\sqrt{|\kappa|}d(p_k, {\bar q})\right)} t_k^2+
\frac{2t_k}{\|s_k\|}\left[f^*-f(p_k)\right]\right]. 
$$
Since the map $(0, +\infty) \ni t\mapsto t/\tanh (t)$ is increasing, using the assumption  $d(p_k, {\bar q})\leq d_0$ and  definition of $t_k$ in  \eqref{StepsizePolyak}, the last inequality becomes
\begin{equation}\label{des.step.poly}
d^2(p_{k+1}, {\bar q})\leq d^2(p_k, {\bar q}) +
 \frac{\sinh\left(\sqrt{|\kappa|}t_k\right)}{\sqrt{|\kappa|}t_k}\left[\,\frac{\sqrt{|\kappa|}d_0}{\tanh\left(\sqrt{|\kappa|}d_0\right)}-\frac{2}{\alpha}\right] \alpha^2 \frac{\left[f(p_k)-f^*\right]^2}{\left\|s_k\right\|^2}.
\end{equation}
Thus, the inequalities  in  \eqref{StepsizePolyak} imply that   $d(p_{k+1}, {\bar q})\leq d(p_k, {\bar q})\leq d_0$ and the induction is concluded.  Hence, $d(p_k, {\bar q})\leq d_0$,  for all $k=0,1,\ldots$.  Therefore,   we can also prove that  \eqref{des.step.poly} hods,  for all $k=0,1,\ldots$.  Taking into account that  $\sinh(\sqrt{|\kappa|}t_k)/(\sqrt{|\kappa|}t_k)\geq 1$, the combination of  second inequality in \eqref{StepsizePolyak}, \eqref{eq:kd0} and  \eqref{des.step.poly}  yield   the desired inequality. 
\end{proof}

\begin{remark}\label{poliak.ineq.kz}
Since $\lim_{t\to 0} \tanh (t)/t=1$ and  $\lim_{t\to 0} \sinh (t)/t=1$,  then by using similar idea  considered in the proof  of Lemma~\ref{lem:MainIneqPolyak}, we can show that, for  Riemannian manifolds with nonnegative curvature,  holds $d^2(p_{k+1}, q)\leq d^2(p_k, q) - (2/\alpha -1) t^2_k$, for all $k=0,1,\ldots$ and all $q\in \Omega^*$.
\end{remark}

The next result presents an iteration-complexity bound for the subgradient method with the Polyak's step-size rule.
\begin{theorem}\label{teo.complexity.polyak}
Assume that     $f: \mathcal{M}\to  \overline{\mathbb{R}}$  is   Lipschitz continuous with  constant $\tau \geq 0$. 
Let ${\bar q}\in \Omega^*$ satisfying \eqref{eq:ProjOmegaStar}. Then,   for every $N\in \mathbb{N}$,  there  holds  
\begin{equation} \label{eq:inc}
 \sum_{k=0}^{N} [f(p_{k})-f^*]^2\leq \frac{\tau^2d^2(p_0, {\bar q})}{{ C_{\kappa, d_0}}}.
\end{equation}
As a consequence, 
\begin{equation}\label{eq:complexityPolyak}
\min\left\{f(p_k) - f^*~:~ k=0, 1,\ldots, N\right\}\leq [\tau d(p_0, {\bar q})]/\sqrt{{ C_{\kappa, d_0}}(N+1)}.
\end{equation}
\end {theorem}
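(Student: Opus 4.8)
The plan is to promote the one-step descent inequality of Lemma~\ref{lem:MainIneqPolyak} into a cumulative estimate by telescoping, and then to extract the complexity bound through a routine ``minimum $\le$ average'' argument. Since the geometric content (the application of Lemma~\ref{lemli} and the invariant $d(p_k,{\bar q})\le d_0$) is already absorbed into Lemma~\ref{lem:MainIneqPolyak}, what remains is essentially bookkeeping with no further appeal to the curvature.

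First I would apply Lemma~\ref{lem:MainIneqPolyak} at the point ${\bar q}$ of \eqref{eq:ProjOmegaStar} and rearrange it as
\[
C_{\kappa, d_0}\,\alpha^2\,\frac{[f(p_k)-f^*]^2}{\|s_k\|^2}\ \le\ d^2(p_k,{\bar q})-d^2(p_{k+1},{\bar q}),\qquad k=0,1,\ldots,N.
\]
Summing over $k=0,\ldots,N$, the right-hand side telescopes to $d^2(p_0,{\bar q})-d^2(p_{N+1},{\bar q})$, which is bounded above by $d^2(p_0,{\bar q})$ because the Riemannian distance is nonnegative; since $C_{\kappa,d_0}>0$ by \eqref{eq:kd0}, dividing through by the leading constant is legitimate. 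To pass from the $\|s_k\|^{-2}$-weighted sum to the unweighted one I would invoke the Lipschitz hypothesis exactly as in the proof of Theorem~\ref{eq:icomp}: Lipschitz continuity of $f$ with constant $\tau$ forces $\|s_k\|\le\tau$ for every $k$, hence $1/\|s_k\|^2\ge 1/\tau^2$ and each summand dominates $[f(p_k)-f^*]^2/\tau^2$. Substituting this lower bound and clearing the positive constants yields the cumulative estimate \eqref{eq:inc}.

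For the consequence \eqref{eq:complexityPolyak} I would use that $f^*=\inf_{x\in\mathcal{M}}f(x)$ gives $f(p_k)-f^*\ge 0$ for all $k$, so the $N+1$ nonnegative numbers $[f(p_k)-f^*]^2$ satisfy $(N+1)\min_{0\le k\le N}[f(p_k)-f^*]^2\le\sum_{k=0}^N[f(p_k)-f^*]^2$. Combining this with \eqref{eq:inc} and taking square roots — valid because the minimized quantity is nonnegative, so the minimum of the squares equals the square of the minimum — delivers \eqref{eq:complexityPolyak}.

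I expect no genuine obstacle here: the only points demanding care are the correct sign in the telescoping step and the uniform subgradient bound $\|s_k\|\le\tau$ furnished by Lipschitz continuity. Once the monotone-descent Lemma~\ref{lem:MainIneqPolyak} is granted, the theorem follows without any additional curvature-dependent analysis.
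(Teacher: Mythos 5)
Your proof is correct and follows essentially the same route as the paper: bound $\|s_k\|\leq\tau$ via Lipschitz continuity, rearrange the one-step inequality of Lemma~\ref{lem:MainIneqPolyak}, telescope the sum, and conclude with the minimum-versus-average argument. One remark: both your derivation and the paper's actually produce the constant $C_{\kappa,d_0}\,\alpha^2$ in the denominator of \eqref{eq:inc}, so the factor $\alpha^2$ appears to have been dropped in the theorem's statement; this does not affect the structure of either argument.
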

\begin{proof}
Since $f$ is  Lipschitz continuous with  constant $\tau \geq 0$, we have $\|s_k \|\leq \tau$,  for all $k=0,1,\ldots$. Thus, it follows from   Lemma~\ref{lem:MainIneqPolyak}   that 
$$
[f(p_k) - f^*]^2 \leq \frac{\tau^2}{ C_{\kappa, d_0}\alpha^2}[d^2(p_k, {\bar q}) - d^2(p_{k+1}, {\bar q})], \qquad k=0,1,\ldots.
$$
Performing the sum of  the above inequality  for $k=0,1, \ldots, N$, we obtain  \eqref{eq:inc}.  The second statement of the theorem is an immediate consequence of the first one.
\end{proof}

\begin{remark}
It is  worth noting that if $\kappa=0$ we have $C_{q,\kappa}=1$ and then  Theorem~\ref{teo.complexity.polyak}  merges into  the inequality \cite[Theorem 3.4]{BentoFerreiraMelo2017}. 
\end{remark}

\begin{theorem}\label{teo.MainPolyak}
The following equality holds $\lim_{k\rightarrow \infty} f(p_k)=f^*$. Consequently, all cluster point of   $\{p_k\}$ is a solution of \eqref{eq:OptP}.
\end{theorem}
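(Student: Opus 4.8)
The plan is to obtain the first equality directly from the fundamental recursion in Lemma~\ref{lem:MainIneqPolyak}, combined with a uniform upper bound on the subgradient norms. First I would note that the inequality in Lemma~\ref{lem:MainIneqPolyak} already shows that the nonnegative sequence $\{d^2(p_k,{\bar q})\}$ is monotonically nonincreasing, hence convergent; in particular every $p_k$ lies in the closed ball $B[{\bar q},d_0]$, so $\{p_k\}$ is bounded. Applying Proposition~\ref{prop:subglim} to this bounded sequence, together with the fact that $s_k\in\partial f(p_k)$ for each $k$, yields a constant $M>0$ with $\|s_k\|\le M$ for all $k$.

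Next I would exploit the convergence of $\{d^2(p_k,{\bar q})\}$. Since the consecutive differences $d^2(p_k,{\bar q})-d^2(p_{k+1},{\bar q})$ tend to zero, the estimate of Lemma~\ref{lem:MainIneqPolyak} and the bound $\|s_k\|\le M$ give
$$
\frac{C_{\kappa,d_0}\,\alpha^2}{M^2}\,[f(p_k)-f^*]^2
\;\le\; C_{\kappa,d_0}\,\alpha^2\,\frac{[f(p_k)-f^*]^2}{\|s_k\|^2}
\;\le\; d^2(p_k,{\bar q})-d^2(p_{k+1},{\bar q})\;\longrightarrow\;0 .
$$
Because $C_{\kappa,d_0}>0$ by \eqref{eq:kd0} and $\alpha>0$, this forces $[f(p_k)-f^*]^2\to 0$, and since $f(p_k)\ge f^*$ for every $k$, I conclude that $\lim_{k\to\infty}f(p_k)=f^*$. (Equivalently, summing the inequality of Lemma~\ref{lem:MainIneqPolyak} over $k$ shows $\sum_k [f(p_k)-f^*]^2<\infty$, which yields the same conclusion.)

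For the stated consequence, let $p_*$ be any cluster point of $\{p_k\}$, say $p_{k_j}\to p_*$. Lower semicontinuity of $f$ gives $f(p_*)\le\liminf_{j}f(p_{k_j})=f^*$, while $f(p_*)\ge f^*$ by definition of $f^*$; hence $f(p_*)=f^*$, so $p_*\in\Omega^*$ and $p_*$ solves \eqref{eq:OptP}.

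The only delicate point is securing the uniform bound $\|s_k\|\le M$, which is where I expect the main difficulty to lie. Rather than invoking Lipschitz continuity (not assumed in this statement), I would obtain it at no cost from the boundedness of $\{p_k\}$ supplied by Lemma~\ref{lem:MainIneqPolyak} and the local boundedness of the subdifferential recorded in Proposition~\ref{prop:subglim}. The strict positivity of $C_{\kappa,d_0}$ guaranteed by \eqref{eq:kd0} is precisely what renders the resulting inequality informative and drives the convergence $f(p_k)\to f^*$.
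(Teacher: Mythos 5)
Your proof is correct, and for the first equality it takes a genuinely different (and arguably cleaner) route than the paper. The paper deduces $\lim_k f(p_k)=f^*$ by letting $N\to\infty$ in inequality \eqref{eq:inc} of Theorem~\ref{teo.complexity.polyak}, which is established under the hypothesis that $f$ is Lipschitz continuous with constant $\tau$ (used there to bound $\|s_k\|\le\tau$). Since Theorem~\ref{teo.MainPolyak} as stated carries no Lipschitz hypothesis, the paper's argument silently imports an assumption from the preceding theorem. You instead extract the boundedness of $\{p_k\}$ directly from the monotonicity in Lemma~\ref{lem:MainIneqPolyak}, invoke Proposition~\ref{prop:subglim} to get a uniform bound $\|s_k\|\le M$, and then let the telescoping differences $d^2(p_k,\bar q)-d^2(p_{k+1},\bar q)\to 0$ force $[f(p_k)-f^*]^2\to 0$. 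This buys you the conclusion without any Lipschitz assumption, so your version actually proves the theorem exactly as stated, at the small cost of one extra appeal to Proposition~\ref{prop:subglim}. For the second assertion both arguments are essentially identical; your use of lower semicontinuity (the paper's standing assumption) in place of the paper's appeal to ``continuity'' is the more careful choice, since $f(p_*)\le\liminf_j f(p_{k_j})=f^*$ together with $f(p_*)\ge f^*$ is all that is needed.
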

\begin{proof}
Letting  $N$ goes to $+\infty$ in  \eqref{eq:inc},  we conclude that  $\lim_{k\rightarrow \infty} f(p_k)=f^*$.  It follows from  Lemma~\ref{lem:MainIneqPolyak} that  $\{p_k\}$ is bound.  For concluding the proof, let ${\bar p}$   accumulation point of   $\{p_k\}$ and   $\{p_{k_i}\}$ a subsequence  of   $\{p_k\}$ such that  $\lim_{k_i\to + \infty}p_{k_i}={\bar p}$.  Therefore, $f({\bar p})=\lim_{k_i\rightarrow \infty} f(p_{k_i})=f^*$ and then ${\bar p}\in \Omega^*$. 
\end{proof}

\begin{corollary}
For $\kappa=0$ the sequence $\{p_k\}$ converges  to a point $q\in \Omega^*$.
\end{corollary}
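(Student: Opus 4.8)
The plan is to upgrade the subsequential convergence already available from Theorem~\ref{teo.MainPolyak} to full convergence by exhibiting the sequence $\{p_k\}$ as quasi-Fej\'er convergent to $\Omega^*$ and then invoking Theorem~\ref{teo.qf}. The crucial ingredient is the strengthened descent inequality recorded in Remark~\ref{poliak.ineq.kz}: when $\kappa=0$ one has
$$
d^2(p_{k+1}, q)\leq d^2(p_k, q)-\left(\tfrac{2}{\alpha}-1\right)t_k^2,\qquad \forall~k=0,1,\ldots,\quad \forall~q\in\Omega^*.
$$
The point to emphasize is that, unlike Lemma~\ref{lem:MainIneqPolyak} (whose inequality is tied to the specific projection point ${\bar q}$), this inequality holds simultaneously for \emph{every} $q\in\Omega^*$, which is exactly what the Fej\'er-type argument requires.

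First I would fix an arbitrary $q\in\Omega^*$ and note that, since $\kappa\to 0$ forces the admissible range of the step-size parameter to satisfy $0<\alpha<2$, the coefficient $2/\alpha-1$ is strictly positive. Consequently the above inequality gives $d^2(p_{k+1},q)\leq d^2(p_k,q)$ for all $k$, so with the choice $\epsilon_k=0$ (trivially summable) the sequence $\{p_k\}$ satisfies Definition~\ref{def:QuasiFejer} and is therefore quasi-Fej\'er convergent to the nonempty set $\Omega^*$.

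Next I would apply Theorem~\ref{teo.qf}. Its first part already yields boundedness of $\{p_k\}$, but boundedness is in any case guaranteed by Theorem~\ref{teo.MainPolyak}. Since $(\mathcal{M},d)$ is a complete metric space in which bounded closed sets are compact, the bounded sequence $\{p_k\}$ admits at least one cluster point $q$. By Theorem~\ref{teo.MainPolyak}, every cluster point of $\{p_k\}$ lies in $\Omega^*$, so in particular $q\in\Omega^*$. The second part of Theorem~\ref{teo.qf} then applies: a quasi-Fej\'er convergent sequence having a cluster point inside the target set converges to that cluster point, whence $\lim_{k\to\infty}p_k=q\in\Omega^*$, which is the assertion.

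The only delicate step is securing a descent inequality valid for all $q\in\Omega^*$ rather than merely for the distinguished point ${\bar q}$ of \eqref{eq:ProjOmegaStar}; this is the precise reason full convergence is claimed here only in the flat case $\kappa=0$ and not in the general lower-bounded-curvature setting of Lemma~\ref{lem:MainIneqPolyak}. Once Remark~\ref{poliak.ineq.kz} furnishes this uniform estimate, the remainder is a routine combination of quasi-Fej\'er monotonicity with the cluster-point information from Theorem~\ref{teo.MainPolyak}, and no further computation is needed.
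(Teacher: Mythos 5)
Your proof is correct and follows essentially the same route as the paper: both rest on the uniform descent inequality of Remark~\ref{poliak.ineq.kz} (valid for every $q\in\Omega^*$ when $\kappa=0$) together with the cluster-point information from Theorem~\ref{teo.MainPolyak}. The only difference is presentational: you package the final step as quasi-Fej\'er convergence plus Theorem~\ref{teo.qf}, whereas the paper argues directly that $\{d(p_k,q)\}$ is monotonically decreasing and hence converges to $0$ along the whole sequence once a subsequence does.
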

\begin{proof}
Lemma~\ref{lem:MainIneqPolyak} implies that $\{p_k\}$ is bounded. As a consequence, $\{p_k\}$ has at least one cluster point. Thus, by Theorem~\ref{teo.MainPolyak}, it follows that there exists a subsequence $\{p_{k_i}\}$ of $\{p_k\}$ converging to a $q\in\Omega^*$. Hence, $ \lim_{k_i\rightarrow \infty}  d(p_{k_i},q)=0$. Since the inequality of Remark~\ref{poliak.ineq.kz} implies that $\{d(p_k,q)\}$ is monotonic decreasing, it holds  that $\lim_{k\rightarrow \infty}  d(p_{k},q)= 0$, completing the proof.
\end{proof}

\section{Numerical examples}\label{sec:num.exem}

In this section, we numerically illustrate the results on complexity-iteration bounds of Section~\ref{sec:subgradient}. For this aim, we consider the convex feasibility problem in Riemannian setting which consists of finding a point $p\in \mathcal{M}$  such that 
\begin{equation} \label{eq:CFP}
 p\in C:=\bigcap_{i=1}^{m}C_i, \qquad \qquad   C_i:=\{p\in \mathcal{M}: \,f_i(p)\leq 0\},
\end{equation}
 where $f_i:\mathcal{M}\rightarrow\mathbb{R}$ is  convex, for all $i=1,...,m$. This problem can be equivalently rewritten as an optimization problem~\eqref{eq:OptP} where  $f:\mathcal{M}\rightarrow \mathbb{R}$ is given by 
$$f(p):=\max\left\{f_1(p),\ldots,f_m(p),0\right\}.$$
Note that $f(x)\geq 0$ for all $x\in \mathcal{M}$. If $C\neq\varnothing$, then  $C=\{p\in \mathcal{M}:~f(p)=0\}$. Thus, $C$ is the solution set of the problem \eqref{eq:OptP}  and $f^*=0$. Now, if the interior of $C$  is nonempty, i.e, $\mbox{int}C \neq\varnothing$, 
then there exist $\epsilon>0$ and $\hat{x}\in \mathcal{M}$ such that $f_i(\hat{x})\leq -\epsilon$, for all  $i=1,\ldots,m$. In this case, defining 
\begin{equation} \label{eq:CFP.func.slater}
f(p):=\max\left\{f_1(p),\ldots,f_m(p),-\epsilon\right\}, 
\end{equation}
 the solution set of the problem \eqref{eq:OptP} is contained in $\mbox{int}C$ and $f^*=-\epsilon$. 
 
Our examples consist of convex feasibility problems~\eqref{eq:CFP} where $C$ has nonempty interior. Let us explain how the examples were generated. Let $ \mathcal{M}$ be a Riemannian Manifold with sectional curvature bounded from above by $K$ and set
$$\rho_{K}:=\frac{1}{2}\min\left\{\mbox{inj}\,\mathcal{M},~\frac{\pi}{2\sqrt{K}}\right\},$$ 
where $\mbox{inj}\,\mathcal{M}$ is the injectivity radius of $\mathcal{M}$, with the convention that $1/\sqrt{K}=+\infty$ for $K\leq 0$; see \cite[pag. 110]{Sakai1996}. Let $d$ be the associated Riemannian distance. Set $q\in\mathcal{M}$, and choose $r>0$ and $v_1,...,v_m\in T_q\mathcal{M}$ in such a way that
\begin{equation} \label{eq:ai}
a_i:= \exp_{q}\left(r\frac{v_i}{\left\|v_i\right\|}\right) \in B(q,\rho),
\end{equation}
for all $i=1,\ldots,m$, and some $\rho<\rho_K$. Since $d(a_i, q)=r$,  we conclude that   $a_i\in \partial B[q,r]$, where $\partial B[q,r]$ denotes the boundary of $B[q,r]$, for all $i=1,\ldots,m$. Let $\epsilon>0$ and define $f_i: \mathcal{M}\to \mathbb{R}$ by 
$$
f_i(p):=d(p,a_i)-r-\epsilon, 
$$
for each $i=1,\ldots,m$, and consider $f$ given by~\eqref{eq:CFP.func.slater}. In this case, we have  $B[q,\epsilon]\subset C$, $f^*=-\epsilon$, and $d_0\leq d(p_0,q)$ where $d_0$ is defined in~\eqref{StepsizePolyak}. Moreover, $f$ is Lipschitz continuous with constant $\tau = 1$. Given $p\in \mathcal{M}$, it follows that
$$-\sum_{j\in I_p}\alpha_j\frac{\exp^{-1}_p a_j}{d(a_j,p)}  \in  \partial f(p),$$
where $I_p:=\left\{j:p\neq a_j, \,\, j=1,\ldots, m\right\}$ and $\sum_{j\in I_{p}}\alpha_j=1$, see~\cite{BentoFerreiraOliveira2015,Yang2010}. We generated two examples with different types of Riemannian manifolds $\mathcal{M}$ as described below.

\begin{example}[Positive definite  symmetric matrices] \label{ex:psm}
Let $\mathbb{P}^n$ and ${\mathbb P}^n_{++}$ be the set of symmetric  matrices and the set of positive definite  symmetric matrices, respectively.   Let $\mathcal{M}\coloneqq ({\mathbb P}^n_{++}, \langle \cdot , \cdot \rangle)$ be the Riemannian manifold endowed with the Riemannian metric given by  
\begin{equation*}\label{eq:metric}
\langle U,V \rangle\coloneqq \mbox{tr} (VX^{-1}UX^{-1}),\qquad X\in \mathcal{M}, \qquad U,V\in
T_X\mathcal{M}\approx\mathbb{P}^n,
\end{equation*}
where $\mbox{tr}(X)$ denotes the trace of  $X$.  We remark that  $\mathcal{M}$ is a Hadamard manifold;  see,   for example,  \cite[Theorem 1.2. p. 325]{Lang1999} and  its curvature  is bound below;  see \cite{LengletRoussonDericheFaugeras2006}. The  exponential mapping   $\exp_X:T_X\mathcal{M}\to \mathcal{M}$ and its inverse   $\exp^{-1}_X: \mathcal{M}\to T_x\mathcal{M}$ in  ${\mathcal M}$ are given, respectively,  by
\begin{equation*} \label{eq:ExpInv}
\exp_XV:=X^{1/2}e^{X^{-1/2}VX^{-1/2}}X^{1/2}, \qquad \exp^{-1}_XY:=X^{1/2}\ln(X^{-1/2}YX^{-1/2})X^{1/2}.
\end{equation*}
Denotes by  $\left\|\cdot\right\|_F$ the Frobenius norm associated to the  inner product  $\langle U,V \rangle_F\coloneqq \emph{tr} (VU)$, for all $U,V\in {\mathbb P}^n_{++}$. Let   $d$ be the Riemannian distance defined in $\mathcal{M}\coloneqq ({\mathbb P}^n_{++}, \langle \cdot , \cdot \rangle)$, i.e.,  
$$
d(A, X)=\left\|\ln\left(X^{-1/2}AX^{-1/2}\right)\right\|_F, \qquad A,X\in {\mathbb P}^n_{++},
$$
see \cite{NesterovTodd2002}. 

We set $n=10$, $m=10$, $r=1$, and $\epsilon=0.1$. We random generated matrix $q\in {\mathbb P}^n_{++}$ and the starting point $p_0\in {\mathbb P}^n_{++}$ with eigenvalues belonging to $(0,100)$, and matrices $v_1,...,v_m\in \mathbb{P}^n$ with eigenvalues belonging to $(-100,100)$. Then, matrices $a_1,...,a_m\in {\mathbb P}^n_{++}$ were generated according to~\eqref{eq:ai}.
\end{example}
 \begin{example}[Sphere]   \label{ex:sphere}
Let $\mathbb{S}:=\{ x\in \mathbb{R}^n:~\|x\|=1\}$ be the  $(n-1)$-dimensional unit sphere.   Endowing the sphere $\mathbb{S}$ with  the Euclidean metric $\left\langle \cdot,\cdot\right\rangle$   we obtain   a  complete Riemannian manifold with    curvature equal to $1$, which will be also  denoted by   $\mathbb{S}$. The tangent plane at $x\in\mathbb{S}$ is given by  $T_x\mathbb{S}:=\{v\in \mathbb{R}^n : ~\langle v, x\rangle=0\}$ and the  exponential mapping $\exp_x:T_x\mathcal{M}\to \mathcal{M}$ in  assigned by 
\begin{equation*}\label{expsphera}
\exp_xv:=
\begin{cases}
\cos(\left\|v\right\|)x+\sin(\left\|v\right\|)\frac{v}{\left\|v\right\|},\qquad v\neq 0, \\
x,\qquad \qquad \qquad \qquad  \qquad  \qquad  v=0.
\end{cases}
\end{equation*}
The inverse of the  exponential mapping $\exp^{-1}_x:\mathcal{M}\to T_x\mathcal{M}$ is given by 
$$ 
\exp_x^{-1}y := \frac{\arccos \left\langle x,y\right\rangle}{\sqrt{1-\left\langle x,y\right\rangle^2}}(I-xx^T)y,
$$
The Riemannian distance   between  $x,y\in\mathbb{S}$ is given by  $d(x,y)= \arccos \left\langle x,y\right\rangle$, for more  details; see, for example, \cite{FerreiraIusemNemeth2014}. 

We set $n = 200$, $m = 50$, $r=\pi/16$, and $\epsilon=0.001$. We defined $q=(1,\ldots,1)/\sqrt{n}$ and random generated vectors $v_1,...,v_m\in T_q\mathbb{S}$. Then, vectors $a_1,...,a_m\in \mathbb{S}$ were generated according to~\eqref{eq:ai}. The starting point $p_0\in \mathbb{S}$ was generated by taking a random vector $v\in T_q\mathbb{S}$ and setting $p_0= \exp_{q}\left(\lambda \frac{\pi}{8}\frac{v}{\left\|v\right\|}\right)$, where $\lambda\in(0,1)$.
\end{example}
We coded Algorithm~\ref{alg:subgradient} in Matlab and run it on the above examples. For Example~\ref{ex:psm} we used the exogenous step-size give by $t_k=1/(k+1)$ for all $k=0,1,\ldots$, while for Example~\ref{ex:sphere} we adopted the Polyak's step-size with $\alpha = 1.9999\times \tanh(d(p_0,q))/d(p_0,q)$. For each example, since $f^*=-\epsilon$, by Theorems~\ref{teo.Main} and \ref{teo.MainPolyak} respectively, there exists $k_0$ such that $p_k\in C$ for all $k\geq k_0$. Therefore, these convex feasibility problems are solved by Algorithm~\ref{alg:subgradient} in a finite number of iterations. Indeed, Algorithm~\ref{alg:subgradient} found a feasible point with 55 and 41 iterations for Examples~\ref{ex:psm} and \ref{ex:sphere}, respectively. Figure~\ref{fig:complexity}~(a) corresponds to Example~\ref{ex:psm} and reports the function values of the left and right hand sides of inequality~\eqref{eq:complexityExogenous} for each iteration of Algorithm~\ref{alg:subgradient}. In its turn, Figure~\ref{fig:complexity}~(b) is related to Example~\ref{ex:sphere} and illustrates the iteration-complexity bound given by~\eqref{eq:complexityPolyak}.

 \begin{figure}[h!]
\begin{minipage}[b]{0.50\linewidth}
\begin{figure}[H]
	\centering
		\includegraphics[scale=0.55]{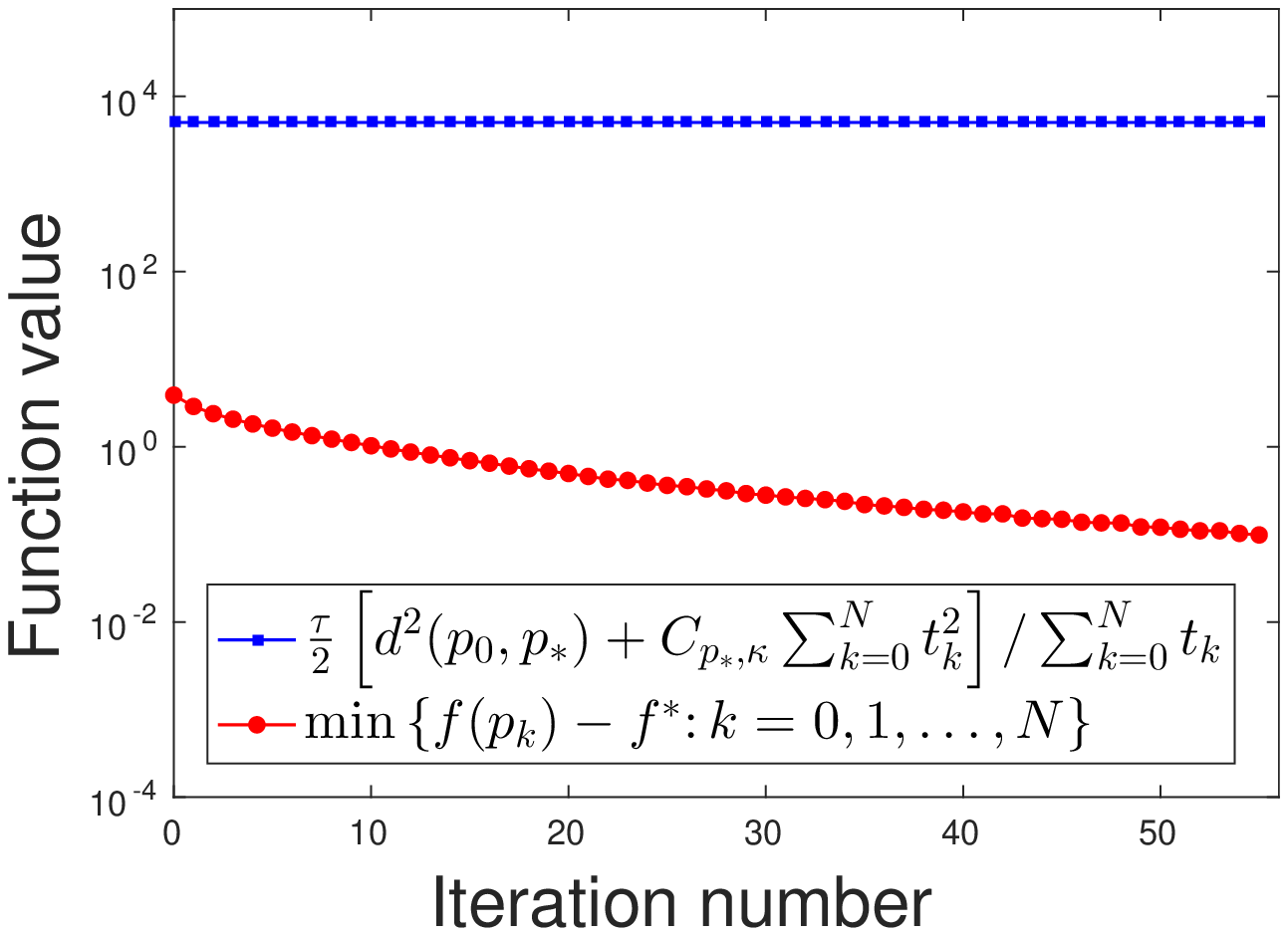}\\
	\footnotesize	(a) 
\end{figure}

\end{minipage} \hfill
\begin{minipage}[b]{0.50\linewidth}

\begin{figure}[H]
	\centering
		\includegraphics[scale=0.55]{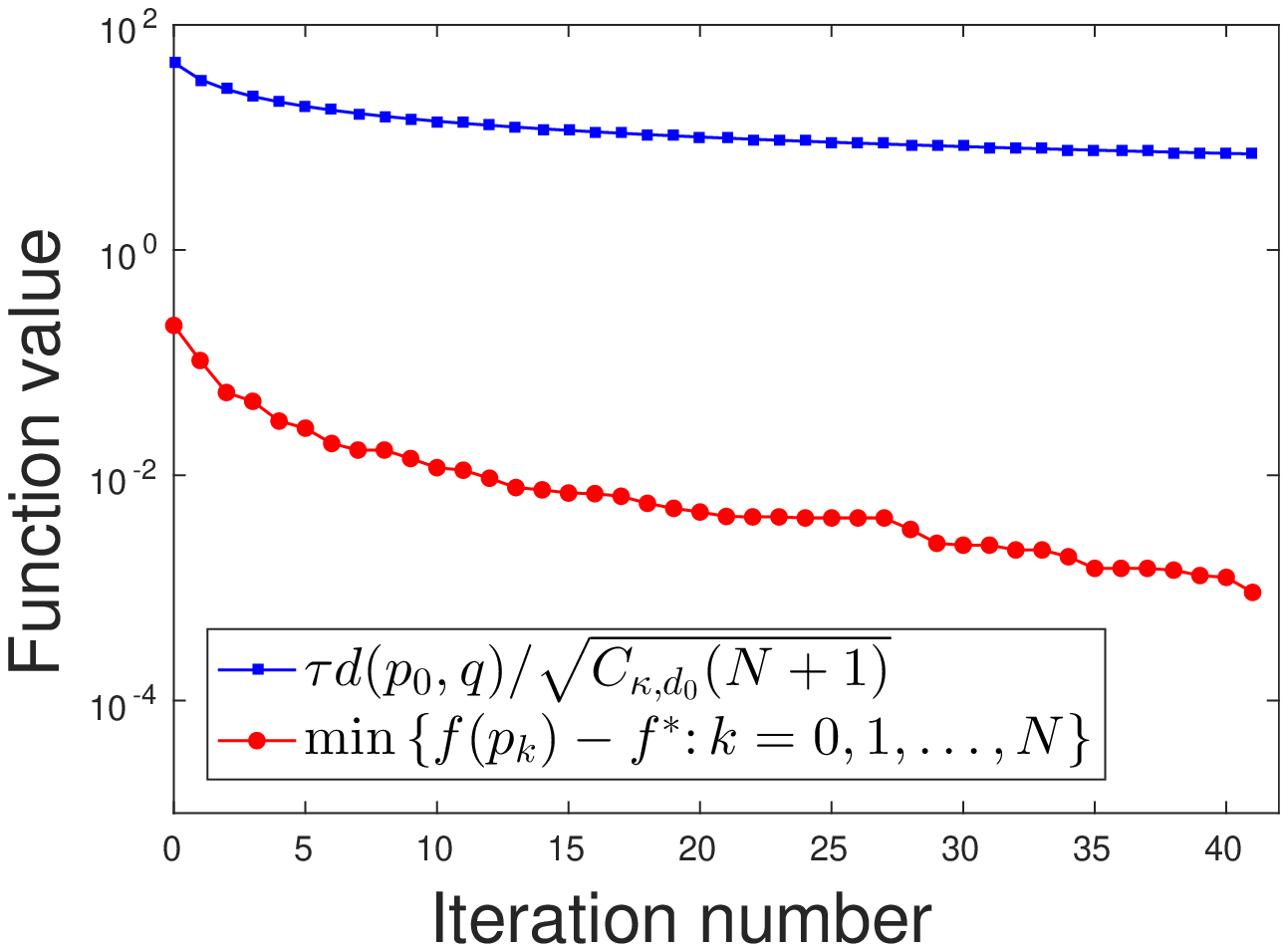}\\
	\footnotesize	(b)
\end{figure}
\end{minipage}\hfill
\caption{Iteration-complexity bound for the Riemannian subgradient method with: (a) exogenous step-size applied to Example~\ref{ex:psm} -- Theorem~\ref{eq:icomp}; (b) Polyak's step-size applied to Example~\ref{ex:sphere} -- Theorem~\ref{teo.complexity.polyak}.}
\label{fig:complexity}
\end{figure}

 As can be seen in Figure~\ref{fig:complexity}, inequalities~\eqref{eq:complexityExogenous} and \eqref{eq:complexityPolyak} in Theorems~\ref{teo.Main} and \ref{teo.MainPolyak} are met for all iterations of Algorithm~\ref{alg:subgradient}, illustrating the practical reliability of our iteration-complexity results.
\section{Conclusion}\label{sec.conclu}
In this paper, we analyzed the iteration-complexity of subgradient method with exogenous step-size and Polyak's step-size. In general, the Polyak's step-size has a better performance than the exogenous step-size, but the choice of exogenous step-size is also interesting
because it does not depend on any data computed during the algorithm, being important in large scale optimization problems.
Since the  feasibility and  optimization problems are closed related,  this paper complements the understanding of the subgradient algorithm in this  settings.  Finally, we remark that for Riemannian manifolds with  curvature  unbounded below,  perhaps another strategy for the step  will be need  since   is not possible to control the distance between the geodesics. Indeed, if the curvature is positive the geodesics  emanating from the same point tend to approximate one each other, the contrary occurs if the curvature is negative. 


\begin{thebibliography}{10}

\bibitem{BentoCruzNeto2013}
G.~C. Bento and J.~X. Cruz~Neto.
\newblock A subgradient method for multiobjective optimization on {R}iemannian
  manifolds.
\newblock {\em J. Optim. Theory Appl.}, 159(1):125--137, 2013.

\bibitem{BentoFerreiraMelo2017}
G.~C. Bento, O.~P. Ferreira, and J.~G. Melo.
\newblock Iteration-{C}omplexity of {G}radient, {S}ubgradient and {P}roximal
  {P}oint {M}ethods on {R}iemannian {M}anifolds.
\newblock {\em J. Optim. Theory Appl.}, 173(2):548--562, 2017.

\bibitem{BentoFerreiraOliveira2015}
G.~C. Bento, O.~P. Ferreira, and P.~R. Oliveira.
\newblock Proximal point method for a special class of nonconvex functions on
  {H}adamard manifolds.
\newblock {\em Optimization}, 64(2):289--319, 2015.

\bibitem{BentoMelo2012}
G.~C. Bento and J.~G. Melo.
\newblock Subgradient method for convex feasibility on {R}iemannian manifolds.
\newblock {\em J. Optim. Theory Appl.}, 152(3):773--785, 2012.

\bibitem{burachik1995full}
R.~Burachik, L.~M.~G. Drummond, A.~N. Iusem, and B.~F. Svaiter.
\newblock Full convergence of the steepest descent method with inexact line
  searches.
\newblock {\em Optimization}, 32(2):137--146, 1995.

\bibitem{CorreaLemarechal1993}
R.~Correa and C.~Lemar\'echal.
\newblock Convergence of some algorithms for convex minimization.
\newblock {\em Math. Programming}, 62(2, Ser. B):261--275, 1993.

\bibitem{FerreiraCPN2006}
J.~X. Da~Cruz~Neto, O.~P. Ferreira, L.~R.~L. P{\'e}rez, and S.~Z. N{\'e}meth.
\newblock Convex- and monotone-transformable mathematical programming problems
  and a proximal-like point method.
\newblock {\em J. Global Optim.}, 35(1):53--69, 2006.

\bibitem{doCarmo1992}
M.~P. do~Carmo.
\newblock {\em Riemannian geometry}.
\newblock Mathematics: Theory \& Applications. Birkh\"auser Boston, Inc.,
  Boston, MA, 1992.
\newblock Translated from the second Portuguese edition by Francis Flaherty.

\bibitem{EdelmanAriasSmith1999}
A.~Edelman, T.~A. Arias, and S.~T. Smith.
\newblock The geometry of algorithms with orthogonality constraints.
\newblock {\em SIAM J. Matrix Anal. Appl.}, 20(2):303--353, 1999.

\bibitem{FerreiraIusemNemeth2014}
O.~P. Ferreira, A.~N. Iusem, and S.~Z. N\'emeth.
\newblock Concepts and techniques of optimization on the sphere.
\newblock {\em TOP}, 22(3):1148--1170, 2014.

\bibitem{FerreiraLouzeiroPrudente2018}
O.~P. {Ferreira}, M.~S. {Louzeiro}, and L.~F. {Prudente}.
\newblock {Gradient Method for Optimization on Riemannian Manifolds with Lower
  Bounded Curvature}.
\newblock {\em ArXiv e-prints}, June 2018.

\bibitem{FerreiraOliveira1998}
O.~P. Ferreira and P.~R. Oliveira.
\newblock Subgradient algorithm on {R}iemannian manifolds.
\newblock {\em J. Optim. Theory Appl.}, 97(1):93--104, 1998.

\bibitem{Goffin2012}
J.-L. Goffin.
\newblock Subgradient optimization in nonsmooth optimization (including the
  {S}oviet revolution).
\newblock {\em Doc. Math.}, (Extra vol.: Optimization stories):277--290, 2012.

\bibitem{GrohsHosseini2016}
P.~Grohs and S.~Hosseini.
\newblock {$\varepsilon$}-subgradient algorithms for locally lipschitz
  functions on {R}iemannian manifolds.
\newblock {\em Adv. Comput. Math.}, 42(2):333--360, 2016.

\bibitem{Lang1999}
S.~Lang.
\newblock {\em Fundamentals of differential geometry}, volume 191 of {\em
  Graduate Texts in Mathematics}.
\newblock Springer-Verlag, New York, 1999.

\bibitem{LengletRoussonDericheFaugeras2006}
C.~Lenglet, M.~Rousson, R.~Deriche, and O.~Faugeras.
\newblock Statistics on the manifold of multivariate normal distributions:
  theory and application to diffusion tensor {MRI} processing.
\newblock {\em J. Math. Imaging Vision}, 25(3):423--444, 2006.

\bibitem{LiMordukhovichWang2011}
C.~Li, B.~S. Mordukhovich, J.~Wang, and J.-C. Yao.
\newblock Weak sharp minima on {R}iemannian manifolds.
\newblock {\em SIAM J. Optim.}, 21(4):1523--1560, 2011.

\bibitem{LiYao2012}
C.~Li and J.-C. Yao.
\newblock Variational inequalities for set-valued vector fields on {R}iemannian
  manifolds: convexity of the solution set and the proximal point algorithm.
\newblock {\em SIAM J. Control Optim.}, 50(4):2486--2514, 2012.

\bibitem{Luenberger1972}
D.~G. Luenberger.
\newblock The gradient projection method along geodesics.
\newblock {\em Management Sci.}, 18:620--631, 1972.

\bibitem{Manton2015}
J.~H. Manton.
\newblock A framework for generalising the {N}ewton method and other iterative
  methods from {E}uclidean space to manifolds.
\newblock {\em Numer. Math.}, 129(1):91--125, 2015.

\bibitem{NesterovTodd2002}
Y.~E. Nesterov and M.~J. Todd.
\newblock On the {R}iemannian geometry defined by self-concordant barriers and
  interior-point methods.
\newblock {\em Found. Comput. Math.}, 2(4):333--361, 2002.

\bibitem{Poljak1978}
B.~T. Poljak.
\newblock Subgradient methods: a survey of {S}oviet research.
\newblock In {\em Nonsmooth optimization ({P}roc. {IIASA} {W}orkshop,
  {L}axenburg, 1977)}, volume~3 of {\em IIASA Proc. Ser.}, pages 5--29.
  Pergamon, Oxford-New York, 1978.

\bibitem{Rapcsak1997}
T.~Rapcs{\'a}k.
\newblock {\em Smooth nonlinear optimization in {$\bold R^n$}}, volume~19 of
  {\em Nonconvex Optimization and its Applications}.
\newblock Kluwer Academic Publishers, Dordrecht, 1997.

\bibitem{Sakai1996}
T.~Sakai.
\newblock {\em Riemannian geometry}, volume 149 of {\em Translations of
  Mathematical Monographs}.
\newblock American Mathematical Society, Providence, RI, 1996.
\newblock Translated from the 1992 Japanese original by the author.

\bibitem{Smith1994}
S.~T. Smith.
\newblock Optimization techniques on {R}iemannian manifolds.
\newblock In {\em Hamiltonian and gradient flows, algorithms and control},
  volume~3 of {\em Fields Inst. Commun.}, pages 113--136. Amer. Math. Soc.,
  Providence, RI, 1994.

\bibitem{Udriste1994}
C.~Udri{\c{s}}te.
\newblock {\em Convex functions and optimization methods on {R}iemannian
  manifolds}, volume 297 of {\em Mathematics and its Applications}.
\newblock Kluwer Academic Publishers Group, Dordrecht, 1994.

\bibitem{WangLiWangYao2015}
X.~Wang, C.~Li, J.~Wang, and J.-C. Yao.
\newblock Linear convergence of subgradient algorithm for convex feasibility on
  {R}iemannian manifolds.
\newblock {\em SIAM J. Optim.}, 25(4):2334--2358, 2015.

\bibitem{Wang2018}
X.~M. Wang.
\newblock Subgradient algorithms on riemannian manifolds of lower bounded
  curvatures.
\newblock {\em Optimization}, 67(1):179--194, 2018.

\bibitem{WangLiYao2015}
X.~M. Wang, C.~Li, and J.~C. Yao.
\newblock Subgradient projection algorithms for convex feasibility on
  {R}iemannian manifolds with lower bounded curvatures.
\newblock {\em J. Optim. Theory Appl.}, 164(1):202--217, 2015.

\bibitem{Yang2010}
L.~Yang.
\newblock Riemannian median and its estimation.
\newblock {\em LMS J. Comput. Math.}, 13:461--479, 2010.

\bibitem{ZhangReddiSra2016}
H.~{Zhang}, S.~J. {Reddi}, and S.~{Sra}.
\newblock Fast stochastic optimization on {R}iemannian manifolds.
\newblock {\em ArXiv e-prints}, pages 1--17, 2016.

\bibitem{ZhangSra2016}
H.~Zhang and S.~Sra.
\newblock First-order methods for geodesically convex optimization.
\newblock {\em JMLR: Workshop and Conference Proceedings}, 49(1):1--21, 2016.

\end{thebibliography}
\def\cprime{$'$} \def\cprime{$'$} \def\cprime{$'$}

\end{document}